\documentclass[12pt,letterpaper,openright,oneside]{article}

\usepackage[top=2.45cm, bottom=2.45cm, left=2.45cm, right=2.45cm]{geometry} 
\linespread{1.3} 
\usepackage{amsmath}
\usepackage{amsthm}
\usepackage{amsfonts}
\usepackage{array}
\usepackage{graphicx} 
\usepackage{setspace} 
\usepackage{biblatex}
\usepackage{tikz}
\usepackage{hyperref}
\hypersetup{ colorlinks=true, linkcolor=orange, urlcolor=orange, citecolor = orange}
\urlstyle{same}
\usepackage{blindtext}
\usetikzlibrary{positioning}
\addbibresource{alter.bib}
\graphicspath{ {./images/} } 

\newtheorem{conjecture}{Conjecture}
\newtheorem{corollary}{Corollary}
\newtheorem{definition}{Definition}
\newtheorem{example}{Example}
\newtheorem{lemma}{Lemma}
\newtheorem{remark}{Remark}
\newtheorem{theorem}{Theorem}

\newcommand{\A}[1]{\mathcal{#1}} 
\newcommand{\length}{n} 
\newcommand{\str}[1]{#1} 
\newcommand{\window}{$w \times l$} 
\newcommand{\tori}{$v \times r$} 
\newcommand{\ringNum}{m} 
\newcommand{\ringSize}{p        } 
\newcommand{\deBS}[2]{deBS($#1$,$#2$)} 
\newcommand{\deBT}[5]{deBT($#2$,$#1$,$#4$,$#3$,$#5$)} 
\newcommand{\deBF}[4]{deBF($#1$,$#2$,$#3$,$#4$)} 
\newcommand{\AdeBS}[3]{AdeBS($#1$,$#2$,$#3$)} 
\newcommand{\AdeBG}[3]{AdeBG($#1$,$#2$,$#3$)} 

\begin{document}

\title{  Using alternating de Bruijn sequences to construct de Bruijn tori}
\author{
Matthew Kreitzer, Mihai Nica, Rajesh Pereira
\\ 
University of Guelph
}
\date{\today}
\maketitle

\tableofcontents

\begin{abstract}
A de Bruijn torus is the two dimensional generalization of a de Bruijn sequence. While methods exist to generate these tori, only a few methods of construction are known. We present a novel method to generate de Bruijn tori with rectangular windows by combining two variants of de Bruijn sequences. 

\end{abstract}

\section{Introduction}
De Bruijn tori (sometimes also called perfect maps) have applications in a number of different fields, including robot path planning~\cite{deBTRoboUse,robotPaper2,PMConstructor,PMClasses1,PMClass2}. Generating de Bruijn tori has been the subject of research since the 1980s~\cite{firstDeBT}. As of publication there exists some methods to generate de Bruijn tori~\cite{Horan2016Locating,deBTSquare,Lfill,2x3Cock1988,PMConstructor,PMClasses1}. All of these methods involve using certain types of de Bruijn sequences to construct de Bruijn tori. For instance, Horan and Stevens~\cite{Horan2016Locating} utilized a specialized de Bruijn sequence known as a quotient de Bruijn sequence and modular addition to produce de Bruijn tori. \par
We introduce two objects, alternating de Bruijn sequences and de Bruijn families, which can be used to construct de Bruijn tori. Alternating de Bruijn sequences are a generalization of a de Bruijn sequence which use two different alphabets, while a de Bruijn family is a collection of cyclic strings that have de Bruijn sequence properties when grouped together. \par
In Section~\ref{sec:defs}, we review definitions for de Bruijn sequences, de Bruijn tori, and formally introduce alternating de Bruijn sequences and de Bruijn families. The methods for generating alternating de Bruijn sequences and their corresponding alternating de Bruijn graphs are presented in Section~\ref{sec:AdeBSGen}. Section~\ref{sec:deBTGen} outlines a method for generating de Bruijn tori. It also discusses how our work is an abstraction of the approach presented in~\cite{2x3Cock1988}. Finally, Section~\ref{sec:examples} presents several examples of de Bruijn tori generated using alternating de Bruijn sequences and de Bruijn families. \par

\par

\section{Definitions}\label{sec:defs}
\subsection{A review of de Bruijn sequences and tori}
De Bruijn sequences were first discovered in 1894~\cite{OGdeBSpaper} and later rediscovered by de Bruijn in~\cite{deBruijnDef}. They are defined as follows:

\begin{definition}\label{def:deBS}
    A de Bruijn sequence of order $\length$ on the alphabet $\A{A}$ is a cyclical string of length ${|\A{A}|}^{\length}$ where every possible string of length $\length$ from the alphabet $\A{A}$ appears exactly once. We denote the set of all de Bruijn sequences with these parameters as \deBS{\A{A}}{\length}.
\end{definition}
\par
\begin{example} \label{ex:deBS}
The cyclic string $00010111$ is a member of \deBS{\A{A}=\{0,1\}}{n=3}. The length $3$ substrings of this cyclic string are $\{000,001,010,101,011,111,110,100\}$. These are precisely the set of all eight possible length $3$ words on the alphabet $\A{A}=\{0,1\}$.
\end{example}

\par
De Bruijn tori are the two dimensional generalization of de Bruijn sequences.
\par

\begin{definition}\cite[Definition 2.2]{Horan2016Locating}
    \label{def:deBT}
    A \textbf{de Bruijn torus} of window size \window \ with alphabet $\A{A}$ is an \tori\ array, where the rows/columns of the array are considered cyclically\footnote{The last row and column are adjacent to the first row and column respectively} and in which every rectangular window of size \window\;appears exactly once with $r,v,d,n,m \in \mathbb{N}$. 
     We denote the set of all de Bruijn tori with these parameters as \deBT{r}{v}{l}{w}{\A{A}}.
\end{definition}
\par
\begin{example}\label{ex:2DdeBT}
The following torus is a member of \deBT{4}{4}{2}{2}{\{0,1\}}: 
\[ \left( \begin{array}{cccc}
1 & 0 & 1 & 1  \\
1 & 0 & 0 & 0  \\
0 & 0 & 0 & 1  \\
1 & 1 & 0 & 1  \\
\end{array} \right) \] 
This torus is of size $ 4\times4 $ and contains each of the $16$ possible windows of size $ 2\times 2$ on the alphabet $\A{A} = \{0,1\}$, making it a de Bruijn torus. 
\end{example}
\par
\subsection{De Bruijn families}

A de Bruijn family is a collection of cyclic strings which, when considered together, have the properties of a de Bruijn sequence. 

\begin{definition}\label{def:deBF}
    A \textbf{de Bruijn family} of order $n$ on the alphabet $\A{A}$ is an unordered collection of cyclic strings, $\{\str{S_i}\}^{\ringNum}_{i=1}$, each of length $\ringSize$ with the following ``de Bruijn'' property: \\
    Every string of length $\length$ from the alphabet $\A{A}$ appears exactly once as a substring of some unique member $S_i$, $1\leq i \leq \ringNum$. We denote the set of all de Bruijn families with these parameters as \deBF{\A{A}}{\ringNum}{\ringSize}{\length}.
\label{deBF}
\end{definition} 


\begin{example}\label{ex:deBF1}
    The set of cyclic strings $\{ 0001, 1110\}$ are a member of \deBF{\A{A} = 
\{0,1\}}{\ringNum = 2}{\ringSize = 4}{\length = 3}. When broken into substrings of length three we obtain $\{000,001,010,100\}$ from the first string and $\{111,110,101,011\}$ from the second string. These are all eight possible three length substrings on the alphabet $\A{A} = \{0,1\}$.
\end{example}


\begin{example}\label{ex:deBF2}

The set of  $4$ cyclic strings 
$$
\{10100000,01011111,11100100,11011000\}.
$$
are a member of
\deBF{\A{A} = 
\{0,1\}}{\ringNum = 4}{\ringSize = 8}{\length = 5}
as they contain all possible length $5$ substrings on the alphabet $\A{A}\{0,1\}$
\end{example}

De Bruijn families are inspired by a problem in applied robotics~\cite{eBugs}. The authors of~\cite{eBugs} had constructed a set of disk shaped robots called eBugs. Each eBug had coloured lights evenly spaced among their circumference. The goal in~\cite{eBugs} was for an observer on the ground to be able to determine the identity and orientation of an eBug simply by looking at the lights on one side of the eBug.  In~\cite{eBugs}, it is shown that an optimal set of such coloured lights is equivalent to a de Bruijn family though this term is not used in this paper. Many properties of de Bruijn families can be found in \cite{eBugs}.  One conjecture from this paper is of particular interest as it suggests that de Bruijn families exist for a very wide family of parameters.

\begin{remark}\label{rem:deBF_size}
Note that by simply counting substrings, the condition $\ringNum \ringSize=|\A{A}|^\length$ is a necessary condition for the existence of a de Bruijn family. The authors of \cite{eBugs} have conjectured that  that this condition is also sufficient.
\end{remark}

\begin{conjecture}\cite[Conjecture 1]{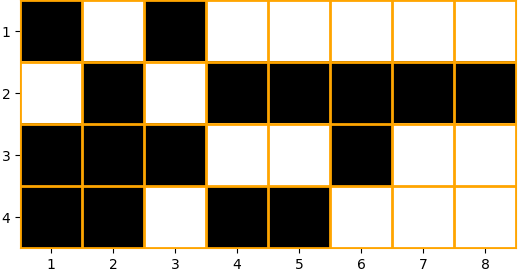}
\label{conj:ringNum}
 There exists a de Bruijn family in \deBF{\A{A}}{\ringNum}{\ringSize}{\length} if and only if $$\ringNum \ringSize=|\A{A}|^\length$$ and $\ringNum > \length$.
\end{conjecture}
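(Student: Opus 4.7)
My plan is to recast the existence question as an edge-decomposition problem on the standard de Bruijn graph $B_{\length-1}(\A{A})$, whose vertices are the length-$(\length-1)$ strings and whose $|\A{A}|^\length$ directed edges correspond bijectively to length-$\length$ strings: the edge labelled $s_1 s_2 \cdots s_\length$ goes from $s_1 \cdots s_{\length-1}$ to $s_2 \cdots s_\length$. This graph is Eulerian, with in-degree and out-degree $|\A{A}|$ at every vertex. Under this correspondence, an element of \deBF{\A{A}}{\ringNum}{\ringSize}{\length} is exactly a partition of the edges of $B_{\length-1}(\A{A})$ into $\ringNum$ edge-disjoint closed walks, each of length $\ringSize$, where the $i^{\mathrm{th}}$ closed walk is the cycle traced by reading the successive length-$\length$ substrings of $S_i$.

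Necessity of $\ringNum \ringSize = |\A{A}|^\length$ is then immediate from counting edges. For the sufficiency direction, I would proceed constructively, starting from the classical existence of a de Bruijn sequence (the $\ringNum=1$ case) viewed as a single Eulerian circuit on $B_{\length-1}(\A{A})$. The key primitive is the local transition-swap operation at a vertex $v$: two closed walks meeting at $v$ can be merged into one, and any closed walk visiting $v$ twice can be split into two walks meeting at $v$. Since any two Eulerian cycle decompositions of $B_{\length-1}(\A{A})$ are connected by a sequence of such swaps, every valid edge partition is in principle reachable from a starting de Bruijn sequence; the task then becomes routing the swaps so as to land on the specific decomposition into $\ringNum$ cycles of equal length $\ringSize$.

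The main obstacle will be exactly this equal-length requirement: generic swap sequences produce cycles of varying lengths, and no classical Eulerian decomposition theorem forces all cycles to share a common length. The quantitative role of the hypothesis $\ringNum > \length$ would be to provide enough slack in the target length $\ringSize = |\A{A}|^\length/\ringNum$ that the graph's rich supply of short cycles, such as self-loops at constant strings and short cycles through low-period necklaces, can be assembled to realise the prescribed partition. I expect the hardest step to be an explicit constructive algorithm combining a catalogue of short cycles from the necklace structure of $B_{\length-1}(\A{A})$ with an adjustment step that calibrates their union to match the prescribed $(\ringNum, \ringSize)$; the necessity of $\ringNum > \length$ would in turn require a separate obstruction argument, identifying an incompatibility between small $\ringNum$ and the multiplicity structure forced on length-$\length$ substrings when $\ringSize$ is large relative to $\length$.
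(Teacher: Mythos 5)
There is a fundamental mismatch here: the statement you are trying to prove is an open \emph{conjecture} (Conjecture~1 of~\cite{eBugs}), and the paper offers no proof of it --- it only records that the necessity of $\ringNum\ringSize=|\A{A}|^{\length}$ follows from counting substrings (Remark~\ref{rem:deBF_size}) and cites special cases proved in~\cite{eBugs}, such as the case $\ringNum=|\A{A}|$, $\ringSize=|\A{A}|^{\length-1}$ recorded as Theorem~\ref{exist}. Your reformulation is the right one and matches how the problem is usually viewed: a de Bruijn family is exactly a partition of the edges of the de Bruijn graph $B_{\length-1}(\A{A})$ into $\ringNum$ edge-disjoint closed trails, each of length $\ringSize$, and the counting condition is then immediate. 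But your proposal does not close either of the two directions that actually constitute the conjecture. For sufficiency, the transition-swap argument only establishes that the space of closed-trail decompositions is connected under local merges and splits; it gives no control over the multiset of cycle lengths, and the equal-length constraint $\ringSize=|\A{A}|^{\length}/\ringNum$ for every part is precisely the open combinatorial problem. You acknowledge this yourself (``I expect the hardest step to be\dots''), so what you have is a research plan, not a proof. The ``obstruction argument'' for the necessity of the second inequality is likewise only named, not supplied.

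A smaller but concrete point: the inequality as printed, $\ringNum>\length$, cannot be the intended necessary condition, since Example~\ref{ex:deBF1} exhibits a valid family with $\ringNum=2$ and $\length=3$; the condition should involve $\ringSize$ (each cyclic string must be long enough that its $\ringSize$ cyclic substrings of length $\length$ can be distinct, which already forces $\ringSize\geq\length$ in nontrivial cases, and both examples in the paper satisfy $\ringSize>\length$). Any ``separate obstruction argument'' you attempt should therefore first pin down which inequality is actually being claimed. If you want a provable statement within reach of your framework, aim at the special cases: your edge-decomposition picture recovers Theorem~\ref{exist} cleanly, because the $|\A{A}|$ closed trails there can be produced by splitting an Eulerian circuit at the $|\A{A}|$ self-loop vertices (the constant strings), and that is a result you could legitimately write up.
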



In~\cite{eBugs}, some special cases of this conjecture were shown to be true.  We note a key example below.

\begin{theorem}\label{exist} \cite[Corollary 1]{eBugs}
Let $\A{A}$ be a finite alphabet and $\length$ be a positive integer.
    Then there exists a de Bruijn family in \deBF{\A{A}}{|\A{A}|}{|\A{A}|^{\length - 1}}{\length}.
\end{theorem}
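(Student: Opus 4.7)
The aim is to construct, for each $a \in \A{A}$, a cyclic string $S_a$ of length $|\A{A}|^{\length-1}$ so that the collection $\{S_a\}_{a \in \A{A}}$ is a de Bruijn family, i.e.\ every length-$\length$ string over $\A{A}$ appears as a substring of exactly one $S_a$ at exactly one position.

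My plan is to recast the problem in the de Bruijn graph $B = B(|\A{A}|, \length)$, whose vertices are the length-$(\length-1)$ strings over $\A{A}$ and whose edges are the length-$\length$ strings, with edge $x_1 x_2 \cdots x_\length$ going from $x_1 \cdots x_{\length-1}$ to $x_2 \cdots x_\length$. A cyclic string of length $|\A{A}|^{\length-1}$ all of whose length-$\length$ substrings are distinct corresponds precisely to a closed walk of length $|\A{A}|^{\length-1}$ in $B$ that uses distinct edges. Hence the existence of the required de Bruijn family reduces to finding a decomposition of the edge set of $B$ into $|\A{A}|$ edge-disjoint closed walks, each of length $|\A{A}|^{\length-1}$.

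Since every vertex of $B$ has in-degree and out-degree $|\A{A}|$, $B$ is Eulerian and admits an Eulerian circuit $C$ of length $|\A{A}|^\length$; along $C$ every vertex is visited exactly $|\A{A}|$ times. The key step is to choose $C$ together with a vertex $v$ so that $C$ visits $v$ at the $|\A{A}|$ equally spaced positions $0, |\A{A}|^{\length-1}, \ldots, (|\A{A}|-1)|\A{A}|^{\length-1}$. Cutting $C$ at these positions then yields $|\A{A}|$ closed walks based at $v$, each of length $|\A{A}|^{\length-1}$, which I read off as the cyclic strings $S_a$ of the family.

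The main obstacle is proving that such an evenly-split Eulerian circuit exists. My first approach would be to induct on $\length$, with base case $\length = 1$ (where each ring is a single character and the family is immediate) and an inductive step that tries to lift a decomposition in $B(|\A{A}|, \length-1)$ using the fact that vertices of $B(|\A{A}|, \length)$ correspond to edges of $B(|\A{A}|, \length-1)$. If the lifting is not clean, I would instead analyse the subgraphs $G_c = \{x_1 \cdots x_\length : x_1 + \cdots + x_\length \equiv c \pmod{|\A{A}|}\}$ for $c \in \A{A}$, each of which is $1$-regular in $B$ and hence a disjoint union of cycles, and then argue by carefully splicing cycles at shared vertices across the different $G_c$ that the edges can be regrouped into $|\A{A}|$ closed walks of length $|\A{A}|^{\length-1}$.
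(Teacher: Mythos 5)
The paper does not actually prove this statement: Theorem~\ref{exist} is imported wholesale from the cited reference (Corollary~1 of the eBugs paper), so there is no in-paper proof to compare against. Judged on its own terms, your proposal contains a genuine gap. The reduction is correct and useful: a member of \deBF{\A{A}}{|\A{A}|}{|\A{A}|^{\length-1}}{\length} is the same thing as a decomposition of the edge set of the de Bruijn graph of order $\length$ into $|\A{A}|$ edge-disjoint closed walks, each with exactly $|\A{A}|^{\length-1}$ edges. But everything then rests on the claim that some Eulerian circuit visits some vertex $v$ at $|\A{A}|$ equally spaced positions, and you never establish this -- you explicitly label it ``the main obstacle'' and offer only two unexecuted sketches. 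Note also that this claim is \emph{stronger} than what the theorem needs: cutting an Eulerian circuit at equally spaced returns to a single $v$ forces every ring of the resulting family to pass through the common vertex $v$ exactly once, a constraint a general de Bruijn family need not satisfy. You have therefore reduced the problem to something at least as hard as the original, and then stopped.

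The fallback sketches do not close the gap. The inductive lift from order $\length-1$ to order $\length$ is not specified at all. For the weight classes, each $G_c$ is indeed $1$-regular with exactly $|\A{A}|^{\length-1}$ edges, but its cycles are \emph{vertex-disjoint} (each vertex has a unique out-edge within $G_c$), so no splicing is possible inside a class; any repair must trade edges between classes, and preserving simultaneously the closed-walk property and the exact edge count $|\A{A}|^{\length-1}$ per walk is precisely the hard part, not a routine afterthought. A known way to actually finish is algebraic rather than graph-theoretic: identify $\A{A}$ with $\mathbb{Z}_k$, take a de Bruijn sequence $T=t_0t_1\cdots t_{p-1}$ of order $\length-1$ with $p=k^{\length-1}$ and character sum $\equiv 0 \bmod k$, and for each $c\in\mathbb{Z}_k$ define $S_c$ by $s_0=c$ and $s_{i+1}=s_i+t_i$. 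A length-$\length$ window of $S_c$ is determined by its first letter together with a length-$(\length-1)$ window of $T$, and this correspondence is a bijection onto $\mathbb{Z}_k\times\{\text{windows of }T\}$, so the $k$ strings $S_c$ jointly contain every $\length$-word exactly once. (One must still check the divisibility of the character sum, which holds for $\length\geq 3$ and requires separate care when $\length=2$ and $k$ is even.) As submitted, your text is a program for a proof rather than a proof.
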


Note that Example~\ref{ex:deBF1} is an  illustration of  Theorem \ref{exist}; in this case,  $\A{A}=\{ 0,1\}$ and $\length=3$.

\subsection{Alternating words and alternating de Bruijn sequences}\label{sec:AdeBS}

In order to define alternating de Bruijn sequences, we first define alternating words. 

\begin{definition}\label{def:altWord}
Let $\A{A}$ and $\A{B}$ be two finite alphabets. An \textbf{alternating word} on the alphabet pair $(\A{A},\A{B})$ is a string of length $2\length+1$ which alternates between elements of $\A{A}$ and elements of $\A{B}$ (beginning and ending with an element of $\A{A}$).  
The set of all $2\length + 1$ alternating words is the set $\A{A} \times \A{B} \times \A{A} \times \ldots \times \A{A} \times \A{B} \times \A{A}$ with $\length+1$ elements from $\A{A}$ and $\length$ elements from $\A{B}$. \end{definition}

\begin{remark}\label{rem:0length}
When we set $\length=0$ in the above definition, we find that alternating words of length one in ($\A{A},\A{B}$) are the words of length one in $\A{A}$. 
\end{remark}

\begin{example}\label{ex:altWord}
The set of strings
$$\{   a 0 a, a 0 b, b 0 a, b 0 b, a 1 a, a 1 b, b 1 a, b 1 b\},$$ 
are all 8 possible alternating word of length $3$ (i.e. $\length = 1$) for the alphabet pair $\A{A} = \{a,b\}$ and $\A{B} = \{0,1\}$. Note this is equivalent to $\A{A} \times \A{B} \times \A{A}$.
\end{example}

\begin{remark}\label{rem:sym}
The roles of the alphabet $\A{A}$ and $\A{B}$ in the definition of an alternating word are not symmetric.
\end{remark}

With this established, we can now define alternating de Bruijn sequences. 

\begin{definition}\label{def:AdeBS}
Let $\A{A}$ and $\A{B}$ be two finite alphabets. An \textbf{alternating de Bruijn sequence} of order $2\length + 1$ on the alphabet pair $(\A{A},\A{B})$, is a cyclic string which alternates between elements of $\A{A}$ and elements of $\A{B}$ and contains every alternating word on $(\A{A},\A{B})$ of length $2\length+1$ exactly once as a substring. We denote the set of all alternating de Bruijn sequences with these parameters as \AdeBS{\A{A}}{\A{B}}{2\length+1}.
\end{definition}

\begin{remark}\label{rem:AdeBSLen}
There are $\A{|A|}^{\length+1}\A{|B|}^{\length}$ alternating words of length $2\length + 1$. In an alternating de Bruijn sequence, every \emph{other} letter is an element of $\A{A}$ and begins exactly one of these alternating words. Therefore an alternating de Bruijn sequence of order $2\length + 1$ must have length $2\A{|A|}^{\length+1}\A{|B|}^{\length}$.
\end{remark}

\begin{example}\label{ex:AdeBS}
The string
$$ a \; 0 \; a \; 0 \; b  \; 0 \; b \; 0 \; a \; 1 \; a \; 1 \; b \; 1 \; b \; 1 $$
is a member of \AdeBS{\A{A}=\{a,b\}}{\A{B}=\{0,1\}}{3}. Note that every element of $\A{A}\times \A{B} \times \A{A}$ appears exactly once. In contrast, there is no restriction on  elements of $\A{B} \times \A{A} \times \A{B}$. For example, $0 b 0$ appears multiple times and  $0 b 1$ does not appear at all.
\end{example}

\begin{remark}\label{rem:AdeBs_vs_alt_word}
    Example~\ref{ex:AdeBS} shows that an alternating de Bruijn sequence is not strictly speaking an alternating word. Any alternating word has an odd number of characters, and both starts \emph{and} ends with an element of $\A{A}$. By contrast, the alternating de Bruijn sequence has an even number of characters and is a \emph{cyclic} word that simply alternates between alphabets. However, one can convert an alternating de Bruijn sequence of length $2v$ into an alternating word of length $2v+1$ by setting the last character to be equal to the first one. For example, by appending a letter $a$ onto the end, the alternating de Bruijn sequence in Example~\ref{ex:AdeBS} becomes the alternating word  $$ a \; 0 \; a \; 0 \; b  \; 0 \; b \; 0 \; a \; 1 \; a \; 1 \; b \; 1 \; b \; 1 \rightarrow a \; 0 \; a \; 0 \; b  \; 0 \; b \; 0 \; a \; 1 \; a \; 1 \; b \; 1 \; b \; 1 \; a$$
\end{remark}

In the construction of alternating de Bruijn sequences, one must consider the relationship between adjacent alternating words in an alternating de Bruijn sequence.  This relationship between alternating words of length $2\length + 1$ can be visualized in an alternating de Bruijn graph. 

\begin{definition}\label{def:AdeBG}

Let $\A{A}$ and $\A{B}$ be two finite alphabets.
For $n\geq 1$, \textbf{the alternating de Bruijn graph} of order $2 \length+1$ on the alphabet pair $(\A{A},\A{B})$ is the directed graph whose vertices are the alternating words of length $2\length+1$ on the alphabet pair $(\A{A},\A{B})$.
There is a directed edge from vertex $X = x_1 x_2...x_{2\length+1}$ to vertex $Y = y_1y_2...y_{2\length+1}$ if there exists an alternating word, $\str{S}$, of length $2\length-1$, such that $X = x_1x_2\str{S}$ and simultaneously  $Y=\str{S}y_{2\length}y_{2\length+1}$ (i.e. the suffix of $X$ is equal to the prefix of $Y$). We think of each edge in the graph as being labelled by the alternating word $x_1 x_2 S y_{2\length}y_{2\length+1}$ of length $2n+3$ (i.e. the edge between $X$ is $Y$ is the concatenation of $X$ and $Y$ writing their shared prefix/suffix only once).  
\par
For $\length=0$, the alternating de Bruijn graph is a multi-graph whose vertices are $\A{A}$. There is a directed edge from vertex $x \in \A{A}$ to vertex $y \in \A{A}$ for each element of the alphabet $\A{B}$. In this case, the set of edges from $x$ to $y$ is the set, $\{x \} \times \A{B} \times \{y\}$ which are alternating words of length $3$. We denote an alternating de Bruijn graph with these parameters as \AdeBG{\A{A}}{\A{B}}{2\length + 1}.
\par

\end{definition}

\begin{remark}\label{rem:AdeBGEdges}
Because of the shared prefix/suffix condition for edges, the alternating de Bruijn graph is designed so that any walk in the graph corresponds to a longer alternating word. The length of the word increases by $2$ for each edge of the path. See Lemma \ref{lem:paths_in_graph} for a precise statement of this idea.
\end{remark}

\begin{figure}\label{fig:AdebG(2,2,3)}
    \centering
    \begin{tikzpicture}[node distance={15mm}, thick, main/.style = {draw, circle}] 
        \node[main] (1) {$a0a$}; 
        \node[main] (2) [below = 3 cm of 1] {$a1a$}; 
        \node[main] (3) [above left = 3 cm of 1] {$a0b$}; 
        \node[main] (4) [below left = 3 cm of 2] {$a1b$}; 
        \node[main] (5) [above left = 2 cm of 2] {$b0a$}; 
        \node[main] (7) [above left = 3 cm of 4] {$b0b$}; 
        \node[main] (6) [above right = 2 cm of 7] {$b1a$}; 
        \node[main] (8) [above = 3 cm of 7] {$b1b$};
        
        \draw[->] (1) to [out=35,in=325,looseness=4] (1) node [right=0.9cm, fill=orange] {1};
        \draw [->,out=300,in=60,looseness=1] (1.300) to node[midway, fill = orange]{2}  (2.60); 
        \draw[->] (1) -- (3) node [midway, fill=orange] {23};
        \draw [->,out=305,in=20,looseness=1.75] (1.305) to node[midway, fill = orange]{5}  (4.20); 

        \draw[->] (2) -- (1) node [midway, fill=orange] {4};
        \draw[->] (2) to [out=35,in=325,looseness=4] (2) node [right = 0.8cm, fill=orange] {3}; 
        \draw[->] (2) -- (3) node [midway, fill=orange] {27};
        \draw[->] (2) -- (4) node [midway, fill=orange] {18}; 

        \draw [->,out=300,in=60,looseness=1.5] (3.300) to node[midway, fill = orange]{12}  (5.60); 
        \draw [->,out=230,in=110,looseness=1] (3.230) to node[midway, fill = orange]{8}  (6.110); 
        \draw[->] (3) -- (7) node [midway, fill=orange] {28};
        \draw[->] (3) -- (8) node [midway, fill=orange] {24}; 

        \draw[->] (4) -- (5) node [midway, fill=orange] {10};
        \draw[->] (4) -- (6) node [midway, fill=orange] {6};
        \draw[->] (4) -- (7) node [midway, fill=orange] {14};
        \draw [->,out=160,in=230,looseness=1.75] (4.160) to node[midway, fill = orange]{19}  (8.230); 
        
        \draw [->,out=20,in=245,looseness=1.6] (5.20) to node[midway, fill = orange]{22}  (1.245);
        \draw[->] (5) -- (2) node [midway, fill=orange] {17};
        \draw[->] (5) -- (3) node [midway, fill=orange] {11};
        \draw [->,out=280,in=60,looseness=1] (5.280) to node[midway, fill = orange]{13}  (4.60);

        \draw [->,out=60,in=180,looseness=1] (6.60) to node[midway, fill = orange]{32}  (1.180); 
        \draw[->] (6) -- (2) node [midway, fill=orange] {26};
        \draw[->] (6) -- (3) node [midway, fill=orange] {7};
        \draw [->,out=260,in=120,looseness=1] (6.260) to node[midway, fill = orange]{9}  (4.120);

        \draw[->] (7) -- (5) node [midway, fill=orange] {16};
        \draw[->] (7) -- (6) node [midway, fill=orange] {31};
        \draw[->] (7) to [out=140,in=220,looseness=4] (7) node [left=0.6cm, fill=orange] {15}; 
        \draw[->] (7) -- (8) node [midway, fill=orange] {29};  

        \draw [->,out=10,in=130,looseness=1.5] (8.10) to node[midway, fill = orange]{21}  (5.130);
        \draw[->,out=300,in=160,looseness=1]  (8.300) to node [midway, fill=orange] {25} (6.160) ;
        \draw [->,out=240,in=120,looseness=1] (8.240) to node[midway, fill = orange]{30}  (7.120);
        \draw[->](8) to [out=215,in=130,looseness=4] (8) node [left=0.8cm, fill=orange] {20}; 

    \end{tikzpicture} 
    \caption{The alternating de Bruijn graph for \AdeBG{\A{A} = \{a,b\}}{\A{B} = \{0,1\} }{3}. Here two vertices have a directed edge if they produce an alternating word of length $5$ when `glued together'. The numbering of the edges show an Eulerian cycle starting at $a0a$.}
\end{figure}
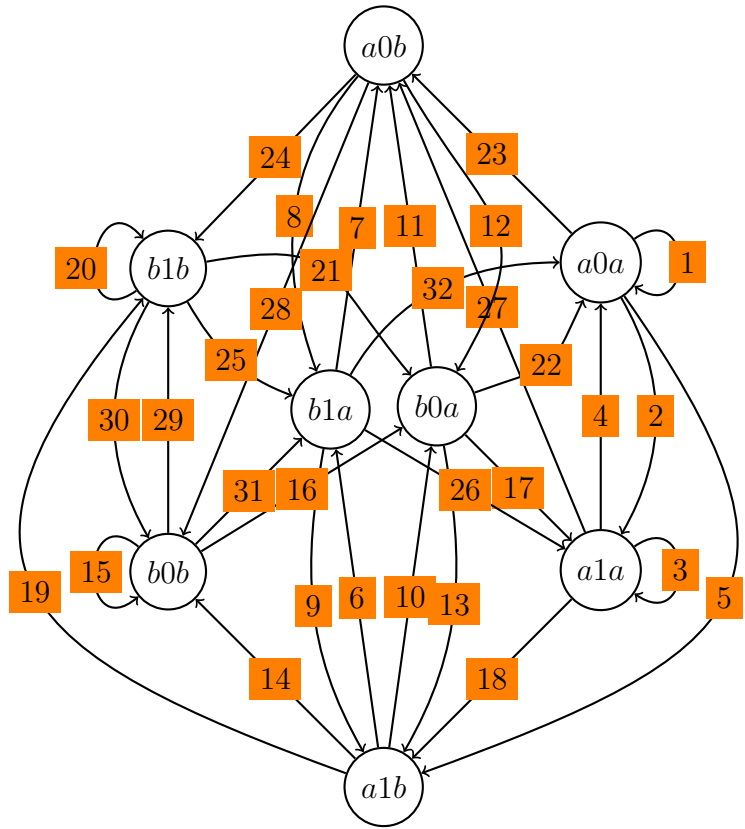
\section{Construction of alternating de Bruijn sequences}\label{sec:AdeBSGen}
The method we develop to generate alternating de Bruijn sequences is inspired by the classical method of using Eulerian cycles to generate de Bruijn sequences  from de Bruijn graphs. An Eulerian cycle is a walk that visits every edge once and once only (but can visit vertices more than once). An Eulerian digraph is a digraph with an Eulerian cycle.  To start, we state Euler's Theorem, then use it prove the existence of alternating de Bruijn sequences. We note that this argument is very similar to (and inspired by) the graph theoretic proof of the existence of de Bruijn sequences.
\begin{theorem}\cite[Theorem 4.6]{chartrand2010graphs} 
    Let $G$ be a connected digraph.  Then $G$ is an Eulerian digraph if and only if for all vertices, $v$, of $G$, the indegree of $v$  is equal to the outdegree of $v$.
\end{theorem}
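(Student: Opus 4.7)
The plan is to prove both directions of this classical biconditional. The forward direction, that an Eulerian digraph has matching indegree and outdegree at every vertex, is a simple counting argument: fix a vertex $v$, fix an Eulerian cycle $C$, and note that each time $C$ passes through $v$ it uses exactly one incoming edge and one outgoing edge (with the closing return of $C$ accounting for the starting vertex in the same balanced way). Since every edge of $G$ appears exactly once in $C$, this pairing shows that the indegree of $v$ equals the outdegree of $v$.

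For the reverse direction, I would follow a Hierholzer-style construction. First I would prove the auxiliary fact that in any (not necessarily connected) digraph where every vertex has equal indegree and outdegree and at least one edge is present, there is a closed directed walk using no edge twice: starting at any vertex $v_0$ with an outgoing edge, greedily extend the walk along unused outgoing edges. At every intermediate vertex $v\neq v_0$, the number of times the partial walk has entered $v$ exceeds the number of times it has left $v$ by one, so by the degree hypothesis there remains an unused outgoing edge. The walk therefore can only get stuck by returning to $v_0$, producing a closed walk $W$.

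Next I would upgrade $W$ into an Eulerian cycle. If $W$ is not already Eulerian, let $G'$ be the subgraph consisting of the edges not used by $W$. Deleting the edges of a closed walk preserves the balance between indegree and outdegree at every vertex, so $G'$ still satisfies the degree hypothesis. By connectivity of $G$, at least one vertex $u$ of $W$ must be incident to an edge of $G'$; applying the auxiliary fact inside $G'$ gives a closed walk $W'$ based at $u$ using only edges of $G'$. Splice $W'$ into $W$ at one of the occurrences of $u$ to obtain a strictly longer closed walk. Iterating, the number of unused edges strictly decreases at each step, so in finitely many rounds we reach a closed walk using every edge, i.e.\ an Eulerian cycle.

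The main obstacle is the careful bookkeeping for the splicing step: showing that deleting a closed walk preserves the in/out-degree balance, that connectivity forces some vertex of the current walk to touch an unused edge, and that the splice yields a valid longer closed walk. These points are routine but need to be stated cleanly to make the induction on the number of unused edges airtight. Since the statement is cited from Chartrand and Zhang, in the paper a reference suffices; the above is the standard self-contained justification.
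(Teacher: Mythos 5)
Your argument is correct, but note that the paper does not prove this statement at all: it is quoted verbatim from Chartrand and Zhang and used as a black box, so there is no in-paper proof to compare against. What you have written is the standard Hierholzer-style argument (greedy closed trail from the degree balance, then splicing via connectivity), and it is sound; the only point deserving explicit care beyond what you flagged is the connectivity step for \emph{digraphs}, where one should check that weak connectivity together with the in/out-degree balance suffices to find a vertex of the current closed walk incident to an unused edge (it does: follow an undirected path from any unused edge to the walk and take the first edge meeting a walk vertex, which cannot lie on the walk), and that balance at that vertex in the leftover subgraph guarantees the incident unused edge can be taken to be outgoing.
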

\begin{remark}\label{rem:ECycleExist}
    Any vertex of an alternating de Bruijn graph of any order on the alphabets $\A{A}$ and $\A{B}$ must have both indegree and outdegree equal to $\vert \A{A} \vert \vert \A{B}\vert$. Hence by Euler's theorem, this graph must have Eulerian cycles.
\end{remark}
\begin{lemma} \label{lem:paths_in_graph}
Any path $\pi$ in \AdeBG{\A{A}}{\A{B}}{2\length+1} with $k$ edges corresponds to an alternating word $W$ of length $2(k-1) + 2\length + 3$. The $k$ edges in $\pi$ correspond to the $k$ alternating words of length $2\length + 3$ appearing as substrings of $W$.
\end{lemma}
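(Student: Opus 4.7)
The plan is to prove the lemma by induction on the number of edges $k$ in the path $\pi$.

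For the base case $k=1$, the statement reduces directly to the definition of edge labels in \AdeBG{\A{A}}{\A{B}}{2\length+1}: each edge is labelled by an alternating word of length $2\length + 3$ (the concatenation of the two endpoints writing their shared suffix/prefix only once), and the formula $2(k-1) + 2\length + 3$ evaluates to $2\length + 3$ when $k = 1$. The single edge trivially accounts for the single length-$(2\length+3)$ substring of $W$.

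For the inductive step, I would assume the claim for a path of $k$ edges, producing an alternating word $W_k$ of length $2(k-1) + 2\length + 3 = 2k + 2\length + 1$, and then extend $\pi$ by one edge from $Y$ to $Z$, where $Y$ is the terminal vertex of the original path. The key observation is that by construction of \AdeBG{\A{A}}{\A{B}}{2\length+1}, the last $2\length + 1$ characters of $W_k$ are precisely the vertex $Y$, and the new edge is labelled by an alternating word of length $2\length + 3$ whose first $2\length + 1$ characters also spell $Y$. Thus I can glue the new edge's label onto $W_k$, overlapping on these $2\length + 1$ shared characters and adding exactly $2$ new characters. This gives a new string $W_{k+1}$ of length $2k + 2\length + 1 + 2 = 2k + 2\length + 3 = 2((k+1) - 1) + 2\length + 3$, as required.

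Two things still need verification in the inductive step. First, I must check that $W_{k+1}$ is genuinely an alternating word: because $W_k$ and the new edge label are both alternating, and they agree on the overlap $Y$ (itself an alternating word), the concatenation inherits the alternating pattern around the junction. Second, I must confirm that the $k+1$ edges of the extended path correspond exactly to the $k+1$ length-$(2\length+3)$ substrings of $W_{k+1}$: the first $k$ such substrings occupy the same positions as in $W_k$ and are labels of the original edges by the inductive hypothesis, while the last one occupies the final $2\length + 3$ characters and equals the label of the newly appended edge by construction.

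The main obstacle, in my view, is not the length bookkeeping but rather writing the overlap argument cleanly without getting tangled in indexing: one must track that the shared window $Y$ of length $2\length+1$ sits as both the suffix of $W_k$ and the prefix of the new edge label, and that the alternation parity matches at the glue point. Once this is laid out carefully, everything else is routine counting.
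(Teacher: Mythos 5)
Your proposal is correct and follows essentially the same approach as the paper: the paper glues the $k$ compatible edge-labels along the path in one step (noting each edge contributes $2$ new characters beyond the shared suffix/prefix), while you formalize exactly that gluing as an induction on $k$ with the same length bookkeeping and overlap argument. No gaps; the inductive organization is just a more explicit rendering of the paper's direct argument.
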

\begin{proof}
By Definition \ref{def:AdeBG}, individual edges of the alternating de Bruijn graph represent the alternating words of length $2\length + 3$. Thus $\pi$ can be thought of as a sequence of $k$ alternating words. Moreover, by the definition of the directed edges, the words along such a path are ``compatible'' in the sense that the suffix of one word is the prefix of the next word; exactly $2$ new characters (one from $\A{A}$ and one from $\A{B}$) are added with each new word. Because of this compatibility, the words along a path with $k$ edges can be ``glued together'' and viewed as the $k$ substrings (each of length $2\length + 3$) of an alternating word of total length  $2(k-1) + 2\length + 3$.
\end{proof}

\begin{theorem} \label{th:de_Bruijn_}
    There is a canonical bijection between Eulerian cycles on the alternating de Bruijn graphs of order $2\length + 1$, \AdeBG{\A{A}}{\A{B}}{2\length+1}, and the set of alternating de Bruijn sequences of window size $2\length + 3$ (namely \AdeBS{\A{A}}{\A{B}}{2\length+3}).
\end{theorem}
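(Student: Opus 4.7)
The plan is to adapt the classical graph-theoretic proof that Eulerian cycles in the de Bruijn graph produce de Bruijn sequences, now using Lemma \ref{lem:paths_in_graph} as the bridge between walks and alternating words.

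First I would define the forward map. Given an Eulerian cycle $C$ in \AdeBG{\A{A}}{\A{B}}{2\length+1}, apply Lemma \ref{lem:paths_in_graph} to the underlying walk: since each edge is labelled by an alternating word of length $2\length + 3$ and consecutive edges share a prefix/suffix of length $2\length + 1$, the whole walk glues into a single alternating word in which the $k = |\A{A}|^{\length+2}|\A{B}|^{\length+1}$ edge labels appear as the $k$ consecutive length-$(2\length+3)$ substrings. Because $C$ is a cycle, its final vertex equals its initial vertex, so the last $2\length+1$ characters coincide with the first $2\length+1$ characters; identifying them turns the word into a cyclic string of length $2k = 2|\A{A}|^{\length+2}|\A{B}|^{\length+1}$, which matches the required length from Remark \ref{rem:AdeBSLen}. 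An Eulerian cycle visits each edge exactly once, so each alternating word of length $2\length + 3$ occurs as a substring exactly once, and the result is a member of \AdeBS{\A{A}}{\A{B}}{2\length+3}.

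Next I would define the inverse map. Given an alternating de Bruijn sequence $W \in $ \AdeBS{\A{A}}{\A{B}}{2\length+3}, read off its $k$ cyclic substrings of length $2\length + 3$ in order; each is an edge label in the graph, and the two vertices of that edge (its length-$(2\length+1)$ prefix and suffix) are precisely the two consecutive length-$(2\length+1)$ windows of $W$ straddled by that edge. Consecutive edges therefore share the appropriate endpoint, giving a closed walk. Because $W$ contains every alternating word of length $2\length + 3$ exactly once, this walk uses every edge exactly once and is hence an Eulerian cycle.

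Finally I would check that the two constructions are mutually inverse: starting from a cycle, gluing edge labels and then reading off length-$(2\length+3)$ windows recovers the original edge sequence; starting from $W$, reading off edges and then gluing them reconstructs $W$ exactly because the gluing rule (share length $2\length + 1$) is the inverse of the sliding-window decomposition. The main subtlety — and the only real obstacle — is the word ``canonical'' in the statement: an Eulerian cycle has a distinguished starting edge while a cyclic string has a distinguished starting position, so I would make clear that the bijection is set up by declaring the first edge of the cycle to correspond to the first length-$(2\length+3)$ window of the sequence (equivalently, both objects are considered with a marked starting point, or both are considered up to cyclic rotation, consistently on the two sides). Once this convention is fixed, the two maps above are inverses by construction, completing the bijection.
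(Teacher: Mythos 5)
Your proposal is correct and follows essentially the same route as the paper's proof: specialize Lemma~\ref{lem:paths_in_graph} to cycles so that an Eulerian cycle glues into a cyclic alternating word in which each length-$(2\length+3)$ alternating word appears exactly once. Your version is actually more complete than the paper's, which only argues the forward direction and leaves the inverse map, the mutual-inverse check, and the starting-point convention implicit.
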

\par
\begin{proof}
Specializing Lemma~\ref{lem:paths_in_graph} to cycles, we see that any \emph{cycle} through the graph will correspond to a \emph{cyclic} alternating word (with edges in the cycle corresponding to length $2\length + 3$ substrings of the cyclic word). An \emph{Eulerian} cycle visits each edge exactly once; this corresponds precisely to having each alternating word of length $2\length + 3$ appearing exactly once as a substring of the cyclic word. This is precisely the de Bruijn property, as desired.
\end{proof}

\begin{corollary} \label{cor:adeBS_exists}
For any alphabets $\A{A}$ and $\A{B}$, there always exists at least one alternating de Bruijn sequence on $\A{A}$ and $\A{B}$ of any window size.
\end{corollary}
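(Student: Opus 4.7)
The proof plan is to separate out the trivial window-size-$1$ case and, for all larger window sizes, combine Theorem~\ref{th:de_Bruijn_} with Euler's theorem applied to the alternating de Bruijn graph.

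For window size $1$ (i.e., $\length = 0$ in Definition~\ref{def:AdeBS}), I would construct an example explicitly. Picking any $b \in \A{B}$ and enumerating $\A{A} = \{a_1, \ldots, a_{|\A{A}|}\}$, the cyclic string $a_1 b a_2 b \cdots a_{|\A{A}|} b$ has the required length $2|\A{A}|$, alternates between $\A{A}$ and $\A{B}$, and contains each single-letter alternating word (i.e., each element of $\A{A}$, by Remark~\ref{rem:0length}) exactly once.

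For any window size $2\length + 3$ with $\length \geq 0$, Theorem~\ref{th:de_Bruijn_} reduces the problem to producing an Eulerian cycle in \AdeBG{\A{A}}{\A{B}}{2\length + 1}. By Remark~\ref{rem:ECycleExist}, every vertex has in-degree equal to out-degree (both equal to $|\A{A}||\A{B}|$), so Euler's theorem yields an Eulerian cycle provided the graph is connected.

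The main obstacle, such as it is, will be verifying connectedness, which is glossed over in Remark~\ref{rem:ECycleExist}. To handle it, I would give a standard shift-register argument: each edge out of a vertex $x_1 x_2 \cdots x_{2\length + 1}$ lands at a vertex of the form $x_3 x_4 \cdots x_{2\length + 1} y z$, where $(y,z) \in \A{B} \times \A{A}$ is freely chosen. Iterating $\length + 1$ times, all of the original letters are shifted out and the final vertex can be prescribed to be any target alternating word of length $2\length + 1$. This shows every pair of vertices is connected by a directed path, so Euler's theorem applies and Theorem~\ref{th:de_Bruijn_} delivers the desired alternating de Bruijn sequence.
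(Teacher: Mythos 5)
Your proof takes essentially the same route as the paper's: Theorem~\ref{th:de_Bruijn_} combined with Euler's theorem via the degree count in Remark~\ref{rem:ECycleExist}. The two additions you make --- verifying connectedness of the alternating de Bruijn graph by a shift-register argument (a hypothesis of Euler's theorem that the paper's Remark~\ref{rem:ECycleExist} silently omits) and the explicit construction for window size $1$ (which falls outside Theorem~\ref{th:de_Bruijn_}, since that theorem only produces window sizes $2\length+3\geq 3$) --- are both correct and genuinely tighten an argument the paper leaves incomplete.
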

\begin{proof}
By Euler's theorem and Remark \ref{rem:ECycleExist}, we know that Eulerian cycles exist on the alternating de Bruijn graph. By Theorem \ref{th:de_Bruijn_}, these Eulerian cycles correspond exactly to alternating de Bruijn sequences.
\end{proof}

\begin{example}\label{ex:eCycle}
Consider the following cycle on the graph \AdeBG{\A{A}=\{a,b\}}{\A{B}=\{0,1\}}{3}:
    \begin{gather}
    \nonumber 
    a0a \rightarrow a0a \rightarrow a1a \rightarrow a1a \rightarrow a0a \rightarrow a1b \rightarrow b1a \rightarrow a0b \rightarrow b1a \rightarrow a1b \rightarrow b0a \rightarrow a0b \rightarrow b0a \rightarrow  ... \\
    \nonumber 
    a1b \rightarrow b0b \rightarrow b0b \rightarrow b0a \rightarrow a1a \rightarrow a1b \rightarrow b1b \rightarrow b1b \rightarrow b0a \rightarrow a0a \rightarrow a0b \rightarrow b1b \rightarrow b1a \rightarrow ... \\
     a1a \rightarrow  a0b \rightarrow b0b \rightarrow b1b \rightarrow b0b \rightarrow b1a \rightarrow a0a .
        \label{eq:ePath}
    \end{gather}
Figure \ref{fig:AdebG(2,2,3)} illustrates this cycle where the numbered edges represent the order of edges taken by the path. This cycle is an Eulerian cycle because every edge is traversed exactly once. By ``gluing'' the edges along the cycle together, we obtain the cyclic word,
$$ a0a0a1a1a0a1b1a0b1a1b0a0b0a1b0b0b0a1a1b1b1b0a0a0b1b1a1a0b0b1b0b1. $$
This is a  member of \AdeBS{\A{A}=\{a,b\}}{\A{B}=\{0,1\}}{5} since each alternating word of length $5$ appears exactly once.
\end{example}

\par
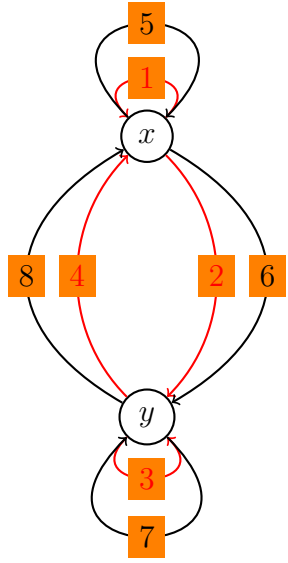
\begin{figure}\label{fig:AdebG(2,2,0)}
    \centering
    \begin{tikzpicture}[node distance={15mm}, thick, main/.style = {draw, circle}] 
        \node[main] (1) {$x$}; 
        \node[main] (2) [below = 3 cm of 1] {$y$}; 
        
        \draw[->,out=45,in=135,looseness=5, color = red] (1) to node[midway, fill = orange]{1} (1);
        \draw[<-,out=45,in=135,looseness=12] (1) to node[midway, fill = orange]{5} (1);

        \draw[->,out=315,in=45,looseness=1, color = red] (1) to node[midway, fill = orange]{2} (2);
        \draw[->,out=330,in=30,looseness=1.5]  (1) to node[midway, fill = orange]{6} (2);
        
        \draw[->,out=135,in=225,looseness=1, color = red] (2) to node[midway, fill = orange]{4} (1);
        \draw[->,out=150,in=210,looseness=1.5] (2) to node[midway, fill = orange]{8} (1); 
        
        \draw[->,out=225,in=315,looseness=5, color = red] (2) to node[midway, fill = orange]{3} (2);
        \draw[<-,out=225,in=315,looseness=12] (2) to node[midway, fill = orange]{7} (2); 
        
    \end{tikzpicture} 
    \caption{This is the alternating de Bruijn graph corresponding to \AdeBG{\A{A} = \{x,y\} }{\A{B} = \{a,b\} }{1}. The character $a$ is represented by the red edges, while $b$ is represented by the black edges. We number the Eulerian cycle that generates the alternating de Bruijn sequence in Example~\ref{ex:AdeBS}.}
\end{figure}

\par

We end this section with two observations about alternating de Bruijn graphs that further explain their connections to de Bruijn graphs.

\begin{definition}
    Let $G_1=(V_1,E_1)$ and $G_2=(V_2,E_2)$ be two directed graphs, then the tensor product of $G_1$ and $G_2$ (denoted $G_1\otimes G_2$) is the graph with vertex set $V_1\times V_2$ where $((u_1,u_2),(v_1,v_2))$ is an edge whenever both $(u_1,v_1)\in E_1$ and $(u_2,v_2)\in E_2$.
\end{definition}

The tensor product of graphs is sometimes called the Kronecker product.  The tensor product has been used to study de Bruijn graphs in \cite{shibata2000isomorphic}, though we will not use any of those results here.  Rather we observe that the alternating de Bruijn graph is the tensor product of two de Bruijn graphs.

\begin{remark}\label{rem:AdeBGTensor}
    There is a graph isomorphism from the \AdeBG{\A{A}}{\A{B}}{2\length+1} to the tensor product between the de Bruijn graph of order $\length+1$ on alphabet $\A{A}$ and the de Bruijn graph of order $\length$ on alphabet $\A{B}$. This can be seen as any edge between two vertices of the alternating de Bruijn graphs, i.e the edge between $v_1 = x_1y_1x_2y_2 \dots x_{\length}y_{\length}x_{\length+1}$ and $v_2 = x_2y_2 \dots x_{\length+1}y_{\length+1}x_{\length+2}$, has a corresponding edge in the tensor product, i.e the edge between $w_1 = (x_1x_2\dots x_{\length}x_{\length+1}, y_1y_2\dots y_{\length}y_{\length+1})$ and $w_2 = (x_2\dots x_{\length+1}x_{\length+2}, y_2\dots y_{\length+1}y_{\length+2})$.
\end{remark}

Our second observation is that alternating de Bruijn graphs have the same line digraph property as de Bruijn graphs.

\begin{definition} Let $G=(V,E)$ be a directed graph.  Then the line digraph of $G$ (denoted $L(G)$) is a directed graph with vertex set $E$ with a directed edge from $e_1$ to $e_2$ if $e_1$ ends at the same vertex that $e_2$ emanates from. 
    \end{definition}

\begin{remark}  The line digraph of an alternating de Bruijn graph of order $2n+1$ is the alternating de Bruijn graph of order $2n+3$.  It follows from this that all alternating De Bruijn graphs are Hamiltonian (i.e has a cycle that visits every vertex once and once only).
\end{remark}

\section{Construction of de Bruijn Tori}\label{sec:deBTGen}
In this section, we describe a method that uses a de Bruijn family and an alternating de Bruijn sequence to generate a de Bruijn torus on a $v \times r$ array, with rectangular window size \window\; and some given alphabet $\A{O}$. To start, we define a rotation on a string as follows:
\begin{definition}\label{def:rotation}
    Let $x$ be an integer.  We denote by $\pi_x$ the forward rotation of a cyclic string by $x$ characters. Precisely, this is defined as follows. For the string $\str{S} = s_0s_1...s_{r-1}$ we have
    \begin{equation}
        \pi_x(\str{S}) = \pi_x(s_0s_1...s_{r-1}) = s_{x}s_{x+1}...s_{r-1+x},
    \end{equation}    
    where all the subscripts are understood to be mod $r$. For example $\pi_{-1}(1000) = 0001$ and $\pi_{-2}(1000) = 0010$.
\end{definition}

We use the rotations $\pi_x$ to define an operation which converts an alternating de Bruijn sequence into a torus by stacking and rotating strings. We denote this construction by $\sigma$ and define this precisely as follows.
\begin{definition}\label{def:deBTOP}[Alternating Words $\to$ Array construction]
Let $\A{S}=\{S_1,\ldots,S_k\}$ be a collection of cyclic strings, each of length $r$, on the alphabet $\A{O}$. Let $\Pi = \{\pi_0, \pi_1,\ldots, \pi_{r-1}\}$ be the set of rotations on cyclic strings of length $r$. Let $\str{D}$ be an alternating word the alphabet pair $(\A{S},\Pi)$ of length $2v+1$, i.e. 
$$\str{D} = \str{S}_{m_1}\pi_{r_1}...\str{S}_{m_v}\pi_{r_v}\str{S}_{m_{v+1}},$$
for some choice of indices $0 \leq m_1,\ldots,m_{v+1} \leq k$ and $0 \leq r_1,\ldots,r_v \leq r$. We define the $ (v+1)\times r$ array with alphabet $\A{O}$, which we denote by $\sigma(D)$,  as follows
    \begin{equation} \label{eq:defConst}
        \centering
        \sigma(\str{D})=\left[\begin{array}{c}
        \str{S}_{m_{1}}\\
        \pi_{r_{1}}(\str{S}_{m_{2}})\\
        \pi_{r_{2}+r_{1}}(\str{S}_{m_{3}})\\
        \vdots\\
        \pi_{r_{v}+\ldots+r_{2}+r_{1}}(\str{S}_{m_{v+1}})
        \end{array}\right].
    \end{equation}
Notice that the rows of the array $\sigma(D)$ consist of rotated copies of the strings $S$.
\end{definition}

\begin{lemma}[Condition for the array $\sigma(D)$ to be a torus]\label{lem:array_is_torus}
As in the setup of Definition~\ref{def:deBTOP}, let $\A{S}=\{S_1,\ldots,S_k\}$ be a collection of cyclic strings each of length $r$ on the alphabet $\A{O}$, let $\Pi = \{\pi_0, \pi_1,\ldots, \pi_{r-1}\}$ be the rotations on these cyclic strings, and let $\str{D} = \str{S}_{m_1}\pi_{r_1}...\str{S}_{m_v}\pi_{r_v}\str{S}_{m_{v+1}}$.  \par
Suppose that $m_1 = m_{v+1}$ (i.e. the first and last cyclic string  of $D$ are equal) and that $r_v+\ldots+r_1 \equiv 0 \text{ mod } r$. Then the first and last row of $\sigma(D)$ are equal, and the $(v+1)\times r$ array $\sigma(D)$ can be thought of as an $v \times r$ \emph{torus}.
\end{lemma}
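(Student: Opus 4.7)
The plan is to unpack the hypothesis and then directly compare the first and last rows of $\sigma(D)$ as written in Equation~\ref{eq:defConst}. The first row is simply $S_{m_1}$, and the last row is $\pi_{r_v+\ldots+r_1}(S_{m_{v+1}})$. I would first observe that whenever $r_v+\ldots+r_1 \equiv 0 \pmod r$, the composite rotation $\pi_{r_v+\ldots+r_1}$ acts on any cyclic string of length $r$ as the identity (this follows from Definition~\ref{def:rotation}, since all subscripts are taken mod $r$). Combining this with the hypothesis $m_1 = m_{v+1}$, I would conclude that the last row of $\sigma(D)$ equals $\pi_0(S_{m_1}) = S_{m_1}$, which is exactly the first row.

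Next, I would explain why this equality of rows is what is needed to regard the $(v+1) \times r$ array as a $v \times r$ \emph{torus}. A torus structure requires two types of cyclic adjacencies: row-cyclic and column-cyclic. The column-cyclic structure is immediate from the construction, because every row of $\sigma(D)$ is itself a (rotated) cyclic string of length $r$ on the alphabet $\A{O}$, so wrapping from the last column back to the first column is automatic. For the row-cyclic structure, one needs the bottom of the array to glue onto the top, which is precisely the statement that the first and last rows coincide. In that case the $(v+1)$-th row is redundant and can be discarded, leaving an array with $v$ distinct rows that is cyclic in both coordinates.

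I do not anticipate a substantive obstacle here; the lemma is essentially a bookkeeping verification that the two stated algebraic conditions correspond exactly to the two required geometric closure conditions of a torus. The only mild care needed is to be explicit that the rotation index lives in $\mathbb{Z}/r\mathbb{Z}$ so that $r_v+\ldots+r_1 \equiv 0 \pmod r$ really does yield the identity map, and to remark that the collapse from a $(v+1) \times r$ array to a $v \times r$ torus is the same identification used when a cyclic string of period $n+1$ with repeated first and last character is viewed as a cyclic string of length $n$.
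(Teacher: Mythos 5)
Your proposal is correct and follows essentially the same argument as the paper: reduce the cumulative rotation to $\pi_0$ using $r_v+\ldots+r_1 \equiv 0 \pmod r$, invoke $m_1 = m_{v+1}$ to conclude the last row equals the first, and identify those rows to obtain the $v \times r$ torus. Your additional remarks on why the column-cyclic structure is automatic are a harmless elaboration beyond what the paper states explicitly.
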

\begin{proof}
Since $r_v+\ldots+r_1 \equiv 0 \text{ mod } r$, we see that $\pi_{r_{v}+\ldots+r_{2}+r_{1}} = \pi_0 $. Furthermore since $m_1 = m_{v+1}$, we have $\pi_{r_{v}+\ldots+r_{2}+r_{1}} S_{m_{v+1}} =\pi_0 S_{m_1} = S_{m_1}$. Thus the first row is equal the last row and we can interpret $\sigma(D)$ a torus of size $v \times r$ by identifying these rows. 
\end{proof}

\begin{lemma}\label{lem:deBTWraps}
As in the setup of Definition~\ref{def:deBTOP}, let $\A{S}=\{S_1,\ldots,S_k\}$ be a collection of cyclic strings each of length $r$ on the alphabet $\A{O}$ and let $\Pi = \{\pi_0, \pi_1,\ldots, \pi_{r-1}\}$ be the rotations on these cyclic strings.  

Let $D$ be an alternating de Bruijn sequence of order $2\length +1$ and length $2v$ on the alphabets $\A{S}$ and $\Pi$. Thinking of $D$ as an alternating word of length $2v+1$ as in Remark \ref{rem:AdeBs_vs_alt_word} by setting the first/last letter to be the same, we can construct the $(v+1) \times r$ array $\sigma(D)$ as in Definition~\ref{def:deBTOP}.

The array $\sigma(D)$ is actually an $v \times r$ \emph{torus} as long as \emph{at least} one of the following three conditions are met:
\begin{itemize}
    \item $|\A{S}|$ is even
    \item $|\Pi|$ is odd
    \item $n \geq 2$
\end{itemize}
\end{lemma}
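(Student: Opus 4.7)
My plan is to reduce to Lemma~\ref{lem:array_is_torus}, which imposes two requirements on $D$: the first and last rows of $\sigma(D)$ must agree, and $r_1 + \cdots + r_v \equiv 0 \pmod r$, where $r = |\Pi|$. The row-agreement is automatic from Remark~\ref{rem:AdeBs_vs_alt_word}: reading the cyclic $D$ as an alternating word of length $2v+1$ by appending a copy of its first letter forces $\str{S}_{m_1} = \str{S}_{m_{v+1}}$. The entire content of the lemma therefore reduces to verifying the modular condition on the sum of rotation indices under each of the three hypotheses.

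To analyze that sum, the key step is to compute the multiplicity of each $\pi_j \in \Pi$ in the sequence $D$. I would use a double-counting argument on the set of incidence pairs of the form (occurrence of a length-$(2n+1)$ alternating word in $D$, internal $\Pi$-slot of that word whose letter is $\pi_j$). On one hand, the de Bruijn property says each of the $v = |\A{S}|^{n+1}|\Pi|^n$ alternating words appears exactly once, each word has $n$ internal $\Pi$-slots, and for each fixed slot the number of alternating words with prescribed $\pi_j$ there is $|\A{S}|^{n+1}|\Pi|^{n-1}$, giving $n|\A{S}|^{n+1}|\Pi|^{n-1}$ incidences in total. On the other hand, any single $\Pi$-position $p$ of the cyclic sequence belongs to exactly the $n$ alternating words starting at positions $p-1, p-3, \ldots, p-(2n-1)$. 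Equating both counts yields that each $\pi_j$ appears with the uniform multiplicity $N := |\A{S}|^{n+1}|\Pi|^{n-1}$, so $r_1 + \cdots + r_v = N\sum_{j=0}^{r-1}j = N \cdot r(r-1)/2$.

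The remaining task is to check that $N \cdot r(r-1)/2 \equiv 0 \pmod r$ under each of the three listed hypotheses, which is a short parity argument. If $r$ is odd (the hypothesis $|\Pi|$ odd), then $(r-1)/2$ is an integer and the sum is an integer multiple of $r$, so we are done. If $r$ is even, then $r(r-1)/2 \equiv r/2 \pmod r$, and it suffices to show $N$ is even: under $|\A{S}|$ even the factor $|\A{S}|^{n+1}$ in $N$ is even, while under $n \geq 2$ the factor $|\Pi|^{n-1}$ contains $|\Pi| = r$ which is even. In every case, $N \cdot r/2 \equiv 0 \pmod r$. The substantive step is the double-count delivering the uniform multiplicity $N$; once that is in hand, the rest is elementary bookkeeping around the parity of $r(r-1)/2$. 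Note the argument implicitly requires $n \geq 1$, which is compatible with the stated hypotheses since the $n \geq 2$ case is one of the options and for $n = 1$ one of the other two parity hypotheses must be invoked.
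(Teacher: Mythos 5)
Your proposal is correct and follows essentially the same route as the paper: reduce to Lemma~\ref{lem:array_is_torus} (the row-agreement being automatic), observe that each rotation $\pi_j$ occurs equally often so that the rotation sum equals $|\A{S}|^{n+1}|\Pi|^{n-1}\cdot \frac{r(r-1)}{2}$, and then check divisibility by $r$ under each of the three hypotheses. The only real difference is that you justify the uniform multiplicity by an explicit double count (valid for $n\geq 1$), whereas the paper simply asserts it ``by symmetry''; your version is the more careful one.
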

\begin{proof} By Lemma \ref{lem:array_is_torus}, we only require that ${r_{v}+\ldots+r_{2}+r_{1}} \equiv 0 \text{ mod } r $, where $r= |\Pi|$. Note the number of rotation indices is $v= |\A{S}|^{n+1} r^n$, because this is the number of alternating words of length $2\length+1$ on the alphabets $(\A{S},\Pi)$. The \emph{average rotation} is simply $\frac{1}{2}(r-1)$, as by symmetry each of the rotations $0,\dots,|\Pi|-1$ occurs an equal number of times in an alternating de Bruijn sequence. Thus the sum of rotation indices is 
\begin{equation}
    {r_{v}+\ldots+r_{2}+r_{1}} = |\A{S}|^{n+1}r^{n} \frac{1}{2}(r-1) = r^{n-1} |\A{S}|^{n+1}  \frac{r(r-1)}{2} 
\end{equation}
Since $\frac{r(r-1)}{2}$ is always an integer, if $n\geq 2$, then this sum clearly divides $r$ from the factor $r^{n-1}$. When $n=1$, the sum is simply $\frac{1}{2} |\A{S}|^{2} r(r-1)$, which is divisible by $r$ when $r-1$ is even or $|\A{S}|$ is even. These are exactly the desired conditions of the lemma. 
\end{proof}

We now use a specialized de Bruijn family, and alternating de Bruijn sequences whose second alphabet is a group of rotations to generate a de Bruijn torus.

\begin{theorem}\label{th:deBTExist}
    Let $\A{S}=\{\str{S}_1,\dots,\str{S}_m\}$ be a de Bruijn family of order $\ell$ on some alphabet $\A{O}$ consisting of $m$ cyclic strings each of length $r$. Let $D$ be an alternating de Bruijn sequence of order $2n+1$ on the alphabet pair $\A{S} = \{\str{S}_1, ... \str{S}_m\}$ and $\Pi = \{\pi_0, ... \pi_{r-1}\}$\footnote{$\Pi$ being the rotations on strings of length $r$}. Suppose that one of the three conditions of Lemma~\ref{lem:deBTWraps} is satisfied so that $\sigma(D)$ is a torus.
    
    Let $w = n+1$ and let $v$ be half the length of $D$, $v = \frac{|D|}{2}$. Then the torus $\sigma(D)$ is in fact a \emph{de Bruijn torus} on a \tori\; array with window size \window\; and alphabet $\A{O}$.
\end{theorem}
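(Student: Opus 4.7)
The plan is to combine a cardinality count with an injectivity argument: first match the number of array cells to the number of possible $\ell \times w$ windows, then show that the window rooted at each location uniquely determines that location. For the counting step, Remark~\ref{rem:AdeBSLen} gives $v = m^{n+1} r^n$, and the de Bruijn family property forces $mr = |\A{O}|^\ell$ (Remark~\ref{rem:deBF_size}), so $v r = (mr)^{n+1} = |\A{O}|^{\ell(n+1)} = |\A{O}|^{\ell w}$ since $w = n+1$. Hence the map from locations $(\rho, c)$ in the $v \times r$ array to $\ell \times w$ windows has domain and codomain of the same size.

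For the injectivity step, I would fix a window rooted at row $\rho$ and column $c$, spanning $w = n+1$ consecutive rows and $\ell$ consecutive columns. By construction, row $\rho + i$ of $\sigma(D)$ is $\pi_{R_{\rho+i}}(\str{S}_{m_{\rho+i+1}})$, where $R_j = r_1 + \cdots + r_j$. Hence the $i$-th row of the window (for $i = 0, \ldots, n$) is the length-$\ell$ substring of $\str{S}_{m_{\rho+i+1}}$ starting at position $c + R_{\rho+i} \bmod r$. Because $\{\str{S}_1, \ldots, \str{S}_m\}$ is a de Bruijn family of order $\ell$, each such substring uniquely identifies both the string index $m_{\rho+i+1}$ and the starting position $c + R_{\rho+i} \bmod r$. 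Successive differences of these starting positions then recover each rotation $r_{\rho+i}$. In effect, reading the window row by row reconstructs the alternating word $\str{S}_{m_{\rho+1}} \pi_{r_{\rho+1}} \str{S}_{m_{\rho+2}} \cdots \pi_{r_{\rho+n}} \str{S}_{m_{\rho+n+1}}$ of length $2n+1$ in the alphabet pair $(\A{S}, \Pi)$.

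Since $D$ is an alternating de Bruijn sequence of order $2n+1$, this reconstructed alternating word occurs exactly once as a substring of $D$, which determines $\rho$ uniquely. Then the offset $c + R_\rho \bmod r$, already recovered from the first row of the window, determines $c$. Thus distinct $(\rho, c)$ give distinct windows, the map from locations to windows is injective, and the cardinality match upgrades injectivity to a bijection; every $\ell \times w$ window appears exactly once, making $\sigma(D)$ a de Bruijn torus.

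The main obstacle I anticipate is the clean interlocking of the two ``de Bruijn'' properties in play: the de Bruijn family property extracts from each row of a window a pair (string index, offset), and the alternating de Bruijn property of $D$ then must be invoked to promote the resulting list of $n+1$ indices and $n$ rotation differences to a unique position in $D$. Careful bookkeeping of cyclic offsets modulo $r$ is the main source of potential slip-ups; the hypothesis that one of the conditions of Lemma~\ref{lem:deBTWraps} holds is precisely what ensures rows and columns truly wrap cyclically, so that windows crossing the boundary behave exactly like interior ones and the argument above applies uniformly to all $(\rho,c)$.
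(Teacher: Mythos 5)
Your proposal is correct, and it shares the paper's central mechanism --- reading a window row by row, using the de Bruijn family property to extract from each row a (string index, offset) pair, and assembling the offsets' successive differences into an alternating word on $(\A{S},\Pi)$ whose unique occurrence in $D$ pins down the location --- but it closes the argument differently. The paper fixes an \emph{arbitrary} window $W$, builds the alternating word $\str{D}_W$, and then proves both halves directly: existence (the unique occurrence of $\str{D}_W$ in $D$ yields, after a rotation by $\pi_{-r(W_1)}$, an actual appearance of $W$ in the first $\ell$ columns of the corresponding subarray of $\sigma(D)$) and uniqueness (any appearance of $W$ forces the rows $S(W_i)$ and rotations $\Delta r(W_i)$, hence an occurrence of $\str{D}_W$). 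You instead prove only injectivity of the location-to-window map and obtain surjectivity from the cardinality match $vr = (mr)^{n+1} = |\A{O}|^{\ell w}$, which follows from Remark~\ref{rem:AdeBSLen} and Remark~\ref{rem:deBF_size}. Your route buys a genuine economy: it sidesteps the paper's most delicate step, namely verifying that the occurrence of $\str{D}_W$ in $D$ really does place $W$ at the expected position in $\sigma(D)$ (the manipulation of $\pi_{-r(W_1)}\sigma(D_W)$ in the paper's Equation~\eqref{eq:sigma_DW}), at the modest cost of an extra counting identity. The paper's direct argument, in exchange, is constructive in both directions and does not depend on knowing the exact length of $D$ in advance. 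Your bookkeeping of the cumulative rotations $R_j$ and of cyclic wrap-around (justified by Lemma~\ref{lem:deBTWraps} for rows and by the cyclicity of each $\str{S}_i$ for columns) is sound, so I see no gap.
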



\begin{proof}
    To prove this, we will suppose we are given a window $W$, which is an arbitrary \window\; array from the alphabet $\A{O}$. We will first show how to map the window $W$ into a specific alternating word $\str{D}_W$ on the alphabets $\mathcal{S},\Pi$. By the alternating de Bruijn property, $\str{D}_W$ exists as a unique substring of $\str{D}$. We will then show that the existence/uniqueness of $\str{D}_W$ a a substring of $D$ directly leads to the existence/uniqueness of the window $W$ as a subarray of $\sigma(D)$.\par
    We start by indexing $\str{D}$ as follows, with $\ringNum_i,r_i, 1 \leq i \leq v$:
    \begin{equation}
        \str{D} = \str{S_{\ringNum_1}}\pi_{r_1}\str{S}_{\ringNum_2}\pi_{r_2}\ldots\str{S}_{\ringNum_v}\pi_{r_v},
    \end{equation}
    so that $\str{S}_{m_i}$ represents the i-th element of the alternating de Bruijn sequences. We now construct the torus $\sigma(D)$, as in Definition \ref{def:deBTOP}, as follows:
    \begin{equation} \label{eq:deBTConst}
        \centering
        \sigma(D)= \left[\begin{array}{c}
        \str{S}_{m_{1}}\\
        \pi_{r_{1}}(\str{S}_{m_{2}})\\
        \pi_{r_{2}+r_{1}}(\str{S}_{m_{3}})\\
        \vdots\\
        \pi_{r_{v}+\ldots+r_{2}+r_{1}}(\str{S}_{m_{v}})
        \end{array}\right]
    \end{equation}\par
    
    Notice each row are the members of $\{S_j\}^m_{j=1}$ each with the rotations $\pi_{r_i}$ applied \emph{cumulatively} to the strings in the de Bruijn family.
    \par
    To show the constructed array is a de Bruijn torus, we show that each window of size \window\; appears in a unique location. Let $W$ be some array of size \window\; with alphabet $\A{O}$ with rows denoted $W_i$ as follows
    \begin{equation}\label{eq:deBTChunk}
        \centering
        W=\left[\begin{array}{c}
        W_{1}\\
        W_{2}\\
        \vdots\\
        W_{w}
        \end{array}\right]    
    \end{equation}\par
    Since each $W_i$ is a string of length $l$, the de Bruijn family property of $\A{S}$ forces $W_i$ to appear exactly once in some element of  $\{\str{S}_j\}^m_{j=1}$ at some particular location within that string. Denote which string $W_i$ appears in by $S(W_i)$ and denote  $W_i$'s location within $S(W_i)$ by $r(W_i)$ (e.g. if we had $S(W_i) = S_1$ and $r(W_i) = 0$, this means that that $W_i$ appears as a substring of $S_1$, starting at position $0$, i.e. the first $l$ characters $S_1$). 
    \par
    We now denote the difference in successive locations, $\Delta r(W_i)$ to be $\Delta r(W_i) = r(W_i) - r(W_{i+1})$. Now consider the alternating word of length $2n+1$, $\str{D_W}$, on the alphabets $\A{A}$ and $\A{B}$ defined as follows
    \begin{equation}\label{eq:Dw}
        \str{D}_W := S(W_1)\pi_{\Delta r(W_1)} ... \pi_{\Delta r(W_{w-1})}S(W_{w})
    \end{equation}\par
    By the properties of alternating de Bruijn sequences, $\str{D}_W$ appears exactly once as a substring of $\str{D}$.  We will show that this forces $W$ to appear exactly once as a subarray of $\sigma(D)$.
    
    To see that $\str{D}_W$ exists as a substring in $D$ implies $W$ exists as a subarray of  $\sigma(D)$, consider as follows. Consider the $w \times r$ array $\sigma(D_W)$. Now rotate all the rows of this array by $\pi_{-r(W_1)}$, and denote the resulting $w \times r$ array by $\pi_{-r(W_1)} \sigma(D_W)$. The differences $\Delta r_i$ that appear in the alternating word $D_W$ are summed in the cumulative sum of rotations in the construction of $\sigma$ from Definition~\ref{def:deBTOP}, and we see that the array is actually
    \begin{equation}
        \label{eq:sigma_DW}
        \pi_{-r(W_1)}(\sigma(D_W)) = 
        \left[\begin{array}{c}
        \pi_{-r(W_1)}S(W_{1})\\
        \pi_{-r(W_2)}S(W_{2})\\
        \vdots\\
        \pi_{-r(W_{w})}S(W_{w})
        \end{array}\right]    
    \end{equation}
    By definition of $r(W_i)$, applying the rotation $\pi_{-r(W_i)}$ to the string $S(W_i)$ rotates the string $S(W_i)$ such that the first $l$ characters in the rotated string $\pi_{-r(W_i)}S(W_i)$ is simply the string $W_i$. Therefore, from \eqref{eq:sigma_DW}, we see that the first $\ell$ characters of each row is the string $W$. Since this is true for every row, actually the first $\ell$ columns of $\pi_{-r(W_1)}(\sigma(D_W))$ are precisely the window $W$. Therefore $W$ is a subarray of $\sigma(D_W)$. By the construction of $\sigma(D)$, we see that $\sigma(D_W)$ is itself a subarray of $\sigma(D)$. Therefore, the window $W$ is a subarray of $\sigma(D)$ as desired. 
    
    \par 
    To see that $\str{D}_W$ is unique as a substring of $D$ implies $W$ is unique as a subarray of  $\sigma(D)$, consider as follows. By the de Bruijn family property of $\A{S}$, there exists a unique member of $\A{S}$ in which $W_i$ appears, namely $S(W_i)$. Therefore, the only way the window $W$ can appear in $\sigma(D)$ is when the strings $S(W_1),\ldots, S(W_w)$ appear consecutively. Moreover, when strings $S(W_1)\dots S(W_w)$ are stacked above one another here, the rotations $\delta r(W_i)$ must be applied between rows for the window $W$ to appear. (Otherwise, the appearance of $W_i$ in row $i$ would not be directly above the appearance of $W_{i+1}$ in row $i+1$; the rotation $\Delta r(W_i)$ is needed to line them up). This arrangement of the rows $S(W_1),\ldots,S(W_w)$ and rotations $\Delta r(W_1),\ldots,\Delta r(W_w)$ is exactly how we constructed the alternating word $D_W$. Therefore, by our construction, any appearance of the window $W$ as a subarray of $\sigma(D)$ corresponds to finding $D_W$ as a substring of $D$. By the uniqueness of $D_W$ as a substring $D$, we therefore conclude the uniqueness of $W$ as a subarray of $\sigma(D)$.

    Since $W$ was arbitrary, we have shown that every window $W$ appears exactly once as a subarray of $\sigma(D)$. Therefore $\sigma(D)$ is a de Bruijn torus, as desired.
        
\end{proof}
    
\begin{corollary}\label{cor:conditionVersion}
In order to construct an $v \times r$ de Bruijn torus with window of size \window\; (where $\ell<r$ and $w<v$) on some alphabet $\A{O}$ by the method described in Theorem~\ref{th:deBTExist}, the following three conditions must hold.
\begin{enumerate}
\item Alphabet size condition: It is necessary that
\begin{equation}\label{eq:rv_Olw}
rv = |\mathcal{O}|^{\ell  w}.
\end{equation}  
\item De Bruijn family size condition: It is necessary that there is an integer $m$ so that
\begin{equation}\label{eq:v_rwm} 
v = r^{w-1} m^w.
\end{equation}
\item De Bruijn family existence condition: There exists a de Bruijn family $\A{S} \in $\deBF{\A{O}}{\ringNum}{r}{\ell} consisting of $m$ strings on the alphabet $\A{O}$ which are each length $r$, and which contain all possible substrings of length $\ell$ exactly once.
\end{enumerate}
\end{corollary}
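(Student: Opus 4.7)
The plan is to verify each of the three conditions as direct parameter-counting consequences of Theorem~\ref{th:deBTExist} and the prerequisite constructions. All three conditions are necessary features of the inputs to the construction, so the proof is essentially bookkeeping rather than a substantive argument.

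For the alphabet size condition, I would appeal to the definition of a de Bruijn torus: the array $\sigma(D)$ has exactly $vr$ positions (considered cyclically), and because it is a de Bruijn torus with window size $\ell \times w$, every one of the $|\mathcal{O}|^{\ell w}$ possible windows must appear exactly once as a subarray. Matching positions to windows gives $rv = |\mathcal{O}|^{\ell w}$.

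For the de Bruijn family size condition, I would apply Remark~\ref{rem:AdeBSLen} to the alphabet pair $(\mathcal{S},\Pi)$, where $|\mathcal{S}|=m$ and $|\Pi|=r$. Since $D$ is an alternating de Bruijn sequence of order $2n+1$ on this pair, its length is $|D| = 2 |\mathcal{S}|^{n+1} |\Pi|^{n} = 2 m^{n+1} r^{n}$. Substituting $w = n+1$ in the hypothesis of Theorem~\ref{th:deBTExist} and recalling $v = |D|/2$, this simplifies to $v = m^{w} r^{w-1}$, which is exactly the required identity.

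For the de Bruijn family existence condition, I would note that Theorem~\ref{th:deBTExist} takes a de Bruijn family $\mathcal{S}$ as a prerequisite input; unpacking Definition~\ref{def:deBF} with the parameters of the theorem yields precisely the membership $\mathcal{S} \in$~\deBF{\mathcal{O}}{m}{r}{\ell}. I would close by remarking that the main (nontrivial) obstacle in actually \emph{applying} the corollary lies in satisfying this third condition, as the existence of de Bruijn families with prescribed parameters is the content of Conjecture~\ref{conj:ringNum} and is only known in special cases such as Theorem~\ref{exist}; the alphabet size and family size conditions are then forced automatically by counting, and are the easy parts of the statement.
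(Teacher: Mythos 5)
Your proposal is correct and follows the same overall bookkeeping strategy as the paper's proof. The one place where you genuinely diverge is condition (2): you obtain $v = r^{w-1}m^{w}$ directly from the length formula for alternating de Bruijn sequences in Remark~\ref{rem:AdeBSLen} applied to the alphabet pair $(\A{S},\Pi)$ with $|\A{S}|=m$ and $|\Pi|=r$ (which is also how the paper's own Remark~\ref{rem:squareDeBT} restates the identity), whereas the paper derives it by combining condition (1) with the counting constraint $rm = |\A{O}|^{\ell}$ for de Bruijn families from Remark~\ref{rem:deBF_size}, so that $rv = (rm)^{w} = r^{w}m^{w}$. Both derivations are one line and both are valid; yours makes explicit that the exponent $w-1$ comes from the structure of the alternating sequence itself, while the paper's makes explicit that conditions (1) and (2) are not independent once the de Bruijn family counting condition is imposed. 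One small point the paper includes that you leave implicit: it observes that once conditions (1)--(3) hold, the only remaining ingredient --- the alternating de Bruijn sequence on $(\A{S},\Pi)$ --- always exists by Theorem~\ref{th:de_Bruijn_} via an Eulerian cycle, so the construction can actually be carried out; your closing remark about Conjecture~\ref{conj:ringNum} correctly identifies the de Bruijn family as the only genuinely uncertain input, which is consistent with this.
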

\begin{proof}
Condition \eqref{eq:rv_Olw} simply ensures that the number of \window\; subarrays of is the same as the number of windows of size \window\; from the alphabet $\A{O}$. Condition \eqref{eq:v_rwm} follows by combining \eqref{eq:rv_Olw} with the necessary condition for de Bruijn families $rm = \vert \A{O} \vert^\ell$ from Remark \ref{rem:deBF_size}. Once these conditions are met, we have all the ingredients for the application of Theorem \ref{th:deBTExist} except for the the alternating De Bruijn sequence. But alternating De Bruijn sequences always exist (when the size conditions are not violated) by Theorem \ref{th:de_Bruijn_}, so in principle one can be found and the construction carried out (In practice, one may find it by finding an Eulerian cycle in the appropriate alternating de Bruijn graph). 
\end{proof}

\begin{remark}

If Conjecture \ref{conj:ringNum} is true, then de Bruijn families will also always exist as long as the de Bruijn family size condition is satisfied, and in that case Theorem \ref{th:deBTExist} will show the existence of a de Bruijn tori whenever conditions \eqref{eq:rv_Olw} and \eqref{eq:v_rwm} are both satisfied.
\end{remark}

\begin{remark}
    Notice both $v > w$ and $r > \ell$ is required for the de Bruijn families and alternating de Bruijn sequences to exist. 
\end{remark}

\begin{remark}\label{rem:squareDeBT}
    Since $r = |\Pi|$, $m = |\A{S}|$ and $w = n+1$ Equation~\eqref{eq:v_rwm} can be rewritten as:      \begin{equation}
        v = |\A{S}|^{\length+1}|\Pi|^{\length}
    \end{equation}    
\end{remark}

Since Theorem \ref{exist} gives the existence of a class of de Bruijn families, it can now be used to show the existence of a family of de Bruijn tori.  We note this in the following corollary.

\begin{corollary}
  Let $\A{O}$ be a finite alphabet and let $l,w\in\mathbb{N}$, then there exists an $|\A{O}|^{l-1}$ by $|\A{O}|^{lw-l+1}$ de Bruijn torus with a window size of \window \ on the alphabet $\A{O}$. 
\end{corollary}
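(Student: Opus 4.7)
The plan is to invoke Theorem~\ref{th:deBTExist} directly, feeding it the de Bruijn family whose existence is guaranteed by Theorem~\ref{exist} together with an alternating de Bruijn sequence supplied by Corollary~\ref{cor:adeBS_exists}. The construction should produce a torus of exactly the claimed dimensions on the nose, so most of the work is just bookkeeping the parameters to see they line up, with the only genuine verification being the wrapping condition from Lemma~\ref{lem:deBTWraps}.

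First I would apply Theorem~\ref{exist} with parameter $\length = l$ to extract a de Bruijn family $\A{S} \in $\deBF{\A{O}}{|\A{O}|}{|\A{O}|^{l-1}}{l}, consisting of $m = |\A{O}|$ cyclic strings of length $r = |\A{O}|^{l-1}$. Setting $n = w-1$ and letting $\Pi$ denote the group of rotations on strings of length $r$ (so $|\Pi| = r$), Corollary~\ref{cor:adeBS_exists} provides an alternating de Bruijn sequence $D \in $\AdeBS{\A{S}}{\Pi}{2n+1}. Then by Remark~\ref{rem:AdeBSLen}, half the length of $D$ is
\[
v = |\A{S}|^{n+1}|\Pi|^{n} = |\A{O}|^{w}\cdot|\A{O}|^{(l-1)(w-1)} = |\A{O}|^{lw - l + 1},
\]
which matches the desired dimension, while the row length $r = |\A{O}|^{l-1}$ matches the other.

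The main (and only) obstacle is checking that at least one of the three conditions in Lemma~\ref{lem:deBTWraps} holds so that $\sigma(D)$ truly wraps into a torus rather than just an array. I would handle this by a quick case split: if $w \geq 3$ then $n \geq 2$ and the third condition applies; otherwise $w \in \{1,2\}$, and I split further on the parity of $|\A{O}|$. If $|\A{O}|$ is even then $|\A{S}| = |\A{O}|$ is even (first condition); if $|\A{O}|$ is odd then $|\Pi| = |\A{O}|^{l-1}$ is a power of an odd number, hence odd (second condition). Every case is covered, so Lemma~\ref{lem:deBTWraps} applies.

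With both ingredients in hand and the wrapping condition confirmed, Theorem~\ref{th:deBTExist} immediately yields that $\sigma(D)$ is a de Bruijn torus of size $v \times r = |\A{O}|^{lw-l+1} \times |\A{O}|^{l-1}$ with window size $l \times w$ on $\A{O}$, which is the statement (possibly up to transposition of the two torus dimensions, which is immaterial). No further work is needed.
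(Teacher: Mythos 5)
Your proposal is correct and follows exactly the route the paper intends: the paper states this corollary without an explicit proof, but the surrounding text makes clear it is obtained by feeding the de Bruijn family of Theorem~\ref{exist} (with $m=|\A{O}|$, $r=|\A{O}|^{l-1}$) and an alternating de Bruijn sequence from Corollary~\ref{cor:adeBS_exists} into Theorem~\ref{th:deBTExist}, which is precisely your argument. Your parameter bookkeeping ($v=|\A{O}|^{w}\cdot|\A{O}|^{(l-1)(w-1)}=|\A{O}|^{lw-l+1}$) and your verification of the wrapping condition of Lemma~\ref{lem:deBTWraps} (which in fact always holds here, since $|\A{O}|$ even gives the first condition and $|\A{O}|$ odd makes $|\A{O}|^{l-1}$ odd, giving the second) are both sound, and if anything more careful than the paper itself.
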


An example of the de Bruijn torus in the case $\A{O}=\{0,1\}$ and $\ell=w = 3$ is given in Example \ref{ex:3x3fixed}.

For purposes of comparison, we note that the method in \cite{2x3Cock1988} gives constructs the following Be Bruijn tori.

\begin{theorem}\cite{2x3Cock1988}\label{cock}
   Let $\A{O}$ be a finite alphabet and let $l,w\in\mathbb{N}$, then there exists an $|\A{O}|^{l}$ by $|\A{O}|^{lw-l}$ de Bruijn torus with a window size of \window \ on the alphabet $\A{O}$.  
    
\end{theorem}

 We discuss Theorem \ref{cock} further in the next subsection.

\subsection{The special case $\ringNum = 1$.}

Setting $\ringNum = 1$ turns the required de Bruijn family into a de Bruijn sequence. Using Corollary~\ref{cor:conditionVersion}, $\ringNum = 1$ shows $r = |\A{O}|^l$. Furthermore we obtain the relationship, $v=|\A{O}|^{\ell(w-1)}$.
Construction of the required alternating de Bruijn sequence becomes trivial as its first alphabet has only one member. 
\begin{lemma}\label{lem:m=1}
Given a de Bruijn sequence with alphabet $\A{B}$ and order $\length$, it is trivial to construct an alternating de Bruijn sequence on the alphabet pair $\{\A{A}, \A{B}\}$ of order $2\length+1$ when $|\A{A}| = 1$.
\end{lemma}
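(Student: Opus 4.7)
The plan is to construct the alternating de Bruijn sequence explicitly by interleaving the given de Bruijn sequence with the single letter of $\A{A}$. Writing $\A{A} = \{a\}$ and letting $B = b_1 b_2 \cdots b_N$ be the given de Bruijn sequence on $\A{B}$ of order $\length$ (so $N = |\A{B}|^\length$), I would define the cyclic string
\[
S \;=\; a\, b_1\, a\, b_2\, a\, b_3 \cdots a\, b_N.
\]
This has length $2N = 2|\A{A}|^{\length+1}|\A{B}|^\length$, which matches the required length of an alternating de Bruijn sequence of order $2\length+1$ computed in Remark~\ref{rem:AdeBSLen}. By construction, $S$ alternates between $\A{A}$ and $\A{B}$.

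Next I would verify the de Bruijn property. Since $|\A{A}|=1$, every alternating word of length $2\length+1$ on $(\A{A},\A{B})$ necessarily has the form $a\, c_1\, a\, c_2 \cdots a\, c_\length\, a$ with $c_j \in \A{B}$, and is therefore uniquely determined by the word $c_1 c_2 \cdots c_\length \in \A{B}^\length$. In the cyclic string $S$, an alternating substring of length $2\length+1$ that begins with an $a$ at position $2i-1$ is exactly $a\, b_i\, a\, b_{i+1} \cdots a\, b_{i+\length-1}\, a$ (indices mod $N$). Hence the length-$\length$ words on $\A{B}$ that appear are precisely the cyclic length-$\length$ substrings of $B$, each appearing the same number of times as in $B$. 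Because $B$ is a de Bruijn sequence of order $\length$, each such word appears exactly once, so each alternating word of length $2\length+1$ appears exactly once in $S$, as required by Definition~\ref{def:AdeBS}.

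There is essentially no obstacle; the only subtlety worth flagging is that the \emph{only} positions in $S$ at which a length $2\length+1$ alternating word can begin are the positions occupied by letters of $\A{A}$, since the remark in Definition~\ref{def:altWord} requires such words to start with an element of $\A{A}$. This justifies restricting attention to substrings beginning at the $a$-positions, which is what makes the counting align exactly with the de Bruijn property of $B$. The construction also shows that the existence of alternating de Bruijn sequences in this degenerate case reduces entirely to the classical existence of de Bruijn sequences, giving a far more direct route than appealing to the Eulerian argument of Corollary~\ref{cor:adeBS_exists}.
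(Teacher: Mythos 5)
Your construction is exactly the paper's: interlace the single letter of $\A{A}$ between consecutive characters of the given de Bruijn sequence on $\A{B}$, and observe that alternating words of length $2\length+1$ correspond bijectively to length-$\length$ words on $\A{B}$. Your write-up is more detailed in verifying the length count and the restriction to $a$-positions, but the approach is the same and correct.
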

\begin{proof}
    Take any de Bruijn sequence of alphabet $\A{B}$ and order $\length$, i.e  $\str{S} = s_1s_2...s_l$, and interlace the only member of $\A{A}$ to construct $\str{A} = as_1as_2...as_l$. The resulting string $\str{A}$ is cyclic. Furthermore it contains every alternating word of length $2\length + 1$, thus $\str{A}$ is an alternating de Bruijn sequence. 
\end{proof}

This is an interesting result which has already been discovered in 1988 by John C. Cock~\cite{2x3Cock1988}. Our construction method can be viewed as a generalization of his work. The following example reproduces the $c = 2, m = 2, n = 3$ \footnote{Here we use the variable convention in \cite{2x3Cock1988}.} example in his paper. Even for the $\ringNum = 1$ case, our method gives some new cases where Cock's method can work as the conditions in Lemma~\ref{lem:deBTWraps} are more broad than those in \cite{2x3Cock1988} . 

\begin{example}\label{ex:1988deBT}
    The following is a de Bruijn torus on $16 \times 4$ array with a rectangular window size $3 \times 2$ and alphabet $\A{O} = \{0,1\}$. (NOTE: We have presented the transpose of the de Bruijn torus here so that it fits more nicely on the page)
    \begin{equation}\label{eq:1988deBT}
        \centering
        \sigma(D)^T = \left[\begin{array}{cccccccccccccccc}
            0&0&0&0&0&1&1&1&1&0&1&1&1&0&1&0\\
            0&0&0&1&1&0&0&1&0&0&1&0&1&0&1&1\\
            1&1&1&1&1&0&0&0&0&1&0&0&0&1&0&1\\
            1&1&1&0&0&1&1&0&1&1&0&1&0&1&0&0\\
        \end{array}\right]    
    \end{equation}
  
    This de Bruijn torus was constructed using the construction of Theorem \ref{th:deBTExist} and the one-member de Bruijn family
    $\A{A} = \{0011\}$
    and alternating de Bruijn sequence, $\str{D}$, of order $2n+1 = 5$ on alphabet pair $(\A{A} = \{a=0011\},\A{B} = \{\pi_0,\pi_1,\pi_2,\pi_3\})$: 
    \setlength\arraycolsep{2pt}
    \begin{equation}
        D = a\;\pi_0\; a\;\pi_0\; a\;\pi_1\; a\;\pi_0\; a\;\pi_2\; a\;\pi_0\; a\;\pi_3\; a\;\pi_1\; a\;\pi_1\; a\;\pi_2\; a\;\pi_1\; a\;\pi_3\; a\;\pi_2\; a\;\pi_2\; a\;\pi_3\; a\;\pi_3 
    \end{equation}
We now construct the torus as $\sigma(\str{D})^T$ as in Definition \ref{def:deBTOP} to obtain\footnote{i.e. we apply the  cumulative rotations on each string.}:, 
\begin{equation}
\sigma(D)^T = 
        \left[\begin{array}{cccccccccccccccccccc}
        a & \pi_{0}(a) & \pi_{0}(a) & \pi_{1}(a) & \pi_{1}(a) & \pi_{3}(a) & \pi_{3}(a) & \pi_{2}(a) & \pi_{3}(a) & \pi_{0}(a) & \pi_{2}(a) & \pi_{3}(a) & \pi_{2}(a) & \pi_{0}(a) & \pi_{2}(a) & \pi_{1}(a) 
        \end{array}\right] \nonumber        
    \end{equation}
    Notice the de Bruijn family is a de Bruijn sequence of order $2$ on the alphabet $\A{C} = \{0,1\}$ and the alternating de Bruijn sequence is a de Bruijn sequence of order $2$ on the alphabet on the alphabet $\A{C} = \{0,1,2,3\}$. This kind of example also appears in~\cite{2x3Cock1988}.
\end{example}

\section{Examples of de Bruijn tori}\label{sec:examples}
For these examples, we write the rotation $\pi_x$ as simply $x$ to simplify notation (i.e. the alphabet $\A{B} = \{0,1,2,3\}$ contains the rotations on strings of length $4$). Furthermore, we display our de Bruijn tori on the binary alphabet $\A{O}=\{0,1\}$ as colored grids using black squares for $1$ and white squares for $0$. For example the de Bruijn torus in Example~\ref{ex:1988deBT} is rendered as:
\begin{center}
    \includegraphics[width=\textwidth]{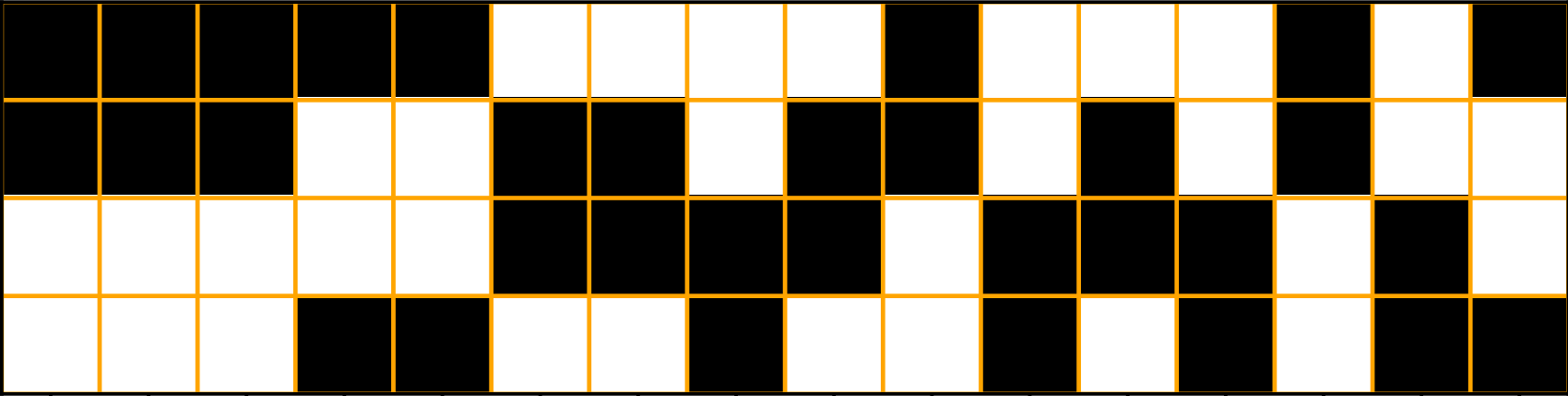}
\end{center}
\begin{example}\label{ex:3x3fixed}
    We now consider construction of a de Bruijn torus with the alphabet $\A{O} = \{0,1\}$ and a $3 \times 3$ window. This gives $\ell = w = 3$ and $|\A{O}| = 2$. From Corollary~\ref{cor:conditionVersion} we have the equations: 
    $rm = 8$ and $rv = 512$. Since de Bruijn families of $r \leq 3$ are impossible (as $l = 3$), we obtain two possibilities. Either $r = 4, m = 2, v = 128$ or $r = 8, m = 1, v = 64$. Note these will generate a $128\times4$ and $64\times8$ size tori respectively.\par
    For $r = 4, m = 2, v = 128$ we must generate a de Bruijn families and alternating de Bruijn sequences. One de Bruijn family\footnote{It is unknown how many de Bruijn families exist.} is: 
    $$\{0001,1110\}$$
    Similarly a  alternating de Bruijn sequences, $\str{D}$, of order $2w-1 = 5$ on the alphabet pair $\A{A} = \{a = 0001,b = 1110\}$ and $\A{B} = \{0,1,2,3\}$\footnote{Recall that $i$ represents $\pi_i$ etc.} is:
   \begin{gather}
    \nonumber 
D = b0b0b0a0a0a0b0a0b1b1b1a1a1a1b1a1b2b2b2a2a2a2b2a2b3b3b3a3a3a3b3a3b0b1b0a1a0a1b0... \\
    \nonumber 
a1b1b0b1a0a1a0b1a0b2b3b2a3a2a3b2a3b3b2b3a2a3a2b3a2b0b2b0a2a0a2b0a2b1b3b1a3a1a3b1a3... \\
    \nonumber
b2b0b2a0a2a0b2a0b3b1b3a1a3a1b3a1b0b3b0a3a0a3b0a3b1b2b1a2a1a2b1a2b2b1b2a1a2a1b2a1b3...\\
    \nonumber
b0b3a0a3a0b3a0
    \label{eq:AdeBS3x3,128x4}
    \end{gather}    
    By Theorem~\ref{th:deBTExist}, $\sigma(\str{D})$ is a de Bruijn torus. Its transpose can be rendered as mentioned above: 
    \begin{center}
    \includegraphics[width=\textwidth]{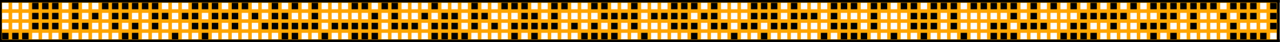}        
    \end{center}\par
    We now consider the other possibility, that is $r = 8, m = 1, v = 64$. Notice the conditions in Remark~\ref{lem:deBTWraps} are satisfied so a  de Bruijn families can be constructed. Since $m=1$, the de Bruijn families can be generated using any de Bruijn sequence of order $\length = 3$. One such de Bruijn sequence is:
    $$00011101$$
    By Lemma~\ref{lem:m=1} the alternating de Bruijn sequence, $\str{D}$, can be constructed from a de Bruijn sequence with the alphabet $\{0,1,2,3,4,5,6,7\}$ of order $2$:
    \begin{gather}
    \nonumber 
D =
a0a0a1a0a2a0a3a0a4a0a5a0a6a0a7a1a1a2a1a3a1a4a1a5a1a6a1a7a2a2a3a2a4a2a5a2a6a2a7... \\
    \nonumber 
a3a3a4a3a5a3a6a3a7a4a4a5a4a6a4a7a5a5a6a5a7a6a6a7a7 
    \label{eq:AdeBS3x3,64x8}
    \end{gather}
    The resulting transpose of de Bruijn torus, $\sigma(\str{D})^T$, is rendered below:
    \begin{center}
    \includegraphics[width=\textwidth]{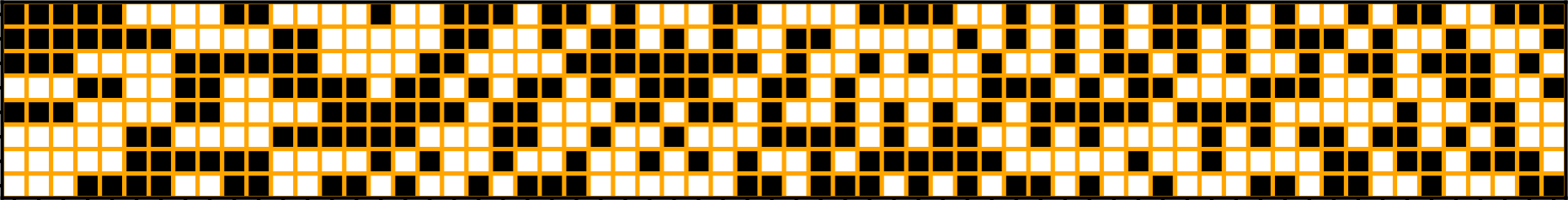}        
    \end{center}\par    
\end{example}
We now generate a de Bruijn torus with a desired size and alphabet.
\begin{example}
    We now consider constructing a de Bruijn torus whose size is $9 \times 9$ and has alphabet $\A{O} = \{0,1,2\}$. This gives $r = v = 9$ and $|\A{O}| = 2$. From Corollary~\ref{cor:conditionVersion}, $9m = 3^{\ell}$ and $81 = 3^{\ell w}$. The only solution to this is $l = w = 2$. Meaning we will construct a de Bruijn torus with window size $2\times2$. Notice the conditions presented in Lemma~\ref{lem:deBTWraps} are satisfied. \par
    We start by generating a de Bruijn sequence of order $\length = 2$ and alphabet $\A{O}$:
    $$\{001021122\}$$
    We now generate an alternating de Bruijn sequence, $\str{D}$, by using Lemma~\ref{lem:m=1} with a a de Bruijn sequence of order $\length = 1$ and alphabet $\A{B} = \{0,1,2,3,4,5,6,7,8\}$:
    $$D = a0a1a2a3a4a5a6a7a8$$
    The following de Bruijn torus, $\sigma(\str{D})$, is shown below:
    \begin{equation}    
        \left[\begin{array}{ccccccccc}
            0& 0& 1& 0& 2& 1& 1& 2& 2 \\
            0& 0& 1& 0& 2& 1& 1& 2& 2 \\ 
            0& 1& 0& 2& 1& 1& 2& 2& 0 \\ 
            0& 2& 1& 1& 2& 2& 0& 0& 1 \\ 
            1& 2& 2& 0& 0& 1& 0& 2& 1 \\ 
            0& 1& 0& 2& 1& 1& 2& 2& 0 \\ 
            1& 2& 2& 0& 0& 1& 0& 2& 1 \\ 
            0& 2& 1& 1& 2& 2& 0& 0& 1 \\ 
            0& 1& 0& 2& 1& 1& 2& 2& 0 \\  
        \end{array}\right]
    \end{equation}    
\end{example}

This exemplifies our construction method's ability to generate multiple tori for desired parameters. The number of valid tori for a parameter set depends both on the number of parameters given, and the number of de Bruijn families and alternating de Bruijn sequences that exist for a set of parameters. 

\section{Concluding remarks}

The work presented here can be extended in many ways. While significant work has been done on the existence of de Bruijn families~\cite{eBugs}, methods to generate de Bruijn families has not been significantly explored. Similarly the number of members in \deBF{\A{A}}{\ringSize}{\ringNum}{\length} would be of interest as it dictates the number of possible de Bruijn tori our construction method produces. Furthermore exploration of `forbidden' de Bruijn tori sizes, that is de Bruijn tori sizes that are impossible for this construction method to construct, could be the focus of future work. Corollary~\ref{cor:conditionVersion} seem to set the groundwork for this.\par 
If we find an efficient method for generating De Bruijn families, we can turn this construction method into an algorithm. At which time exploring the time complexity of such an algorithm would be of interest.

\section{Data Availabilty and Acknowledgements}

The datasets generated during the current study are available from the corresponding author on reasonable request.

We would like to thank the University of Guelph for the opportunity to research these construction methods. We would also like to thank Daniel Ashlock for leading our research into this direction. Matthew Kreitzer was supported by the Ontario Graduate Scholarship (OGS) program. Mihai Nica was supported by the Natural Sciences and Engineering Research Council of Canada (NSERC) Discovery grant RGPIN-2021-02533. Rajesh Pereira was supported by the NSERC Discovery grant RGPIN-2022-04149.

\par
\printbibliography

\end{document}